\documentclass{amsart}
\usepackage[all]{xy}
\usepackage[normalem]{ulem}
\usepackage{amssymb}%
\usepackage{graphicx}%
\usepackage{amsthm}%
\usepackage{amsmath}%
\usepackage{amsfonts} 
\usepackage{latexsym}%
\usepackage{epsfig}%
\usepackage{epic}%
\usepackage{amscd}%
\usepackage{xcolor}%
\usepackage[linktoc=all, pagebackref, hyperindex]{hyperref}%

\theoremstyle{plain}
  \newtheorem{thm}{Theorem}[section]
  \newtheorem{prop}[thm]{Proposition}
  \newtheorem{lem}[thm]{Lemma}
  \newtheorem{cor}[thm]{Corollary}
  
\theoremstyle{definition}
  \newtheorem{dfn}[thm]{Definition}
  \newtheorem{exmp}[thm]{Example}
  \newtheorem{exmps}[thm]{Examples}

  \newtheorem{observ}[thm]{Observation}
\theoremstyle{remark}
  \newtheorem{rem}{Remark}

\newcommand{\lk}{\mathrm{link}}
\newcommand{\del}{\mathrm{del}}
\newcommand{\ass}{\mathrm{ass}}
\newcommand{\Ass}{\mathrm{Ass}}
\newcommand{\fp}{\mathfrak{p}}
\newcommand{\fq}{\mathfrak{q}}
\newcommand{\depth}{\mathrm{depth}}
\newcommand{\sdepth}{\mathrm{sdepth}}
\newcommand{\ZZ}{\mathbb{Z}}
\newcommand{\KK}{\mathbb{K}}
\newcommand{\cF}{\mathcal{F}}
\newcommand{\x}{\textbf{x}}
\newcommand{\bigh}{\mathrm{bigh}}
\newcommand{\height}{\mathrm{ht}}
\newcommand{\reg}{\mathrm{reg}}
\newcommand{\cG}{\mathcal{G}}

\newcommand{\excise}[1]{}

\title{Shedding vertices and ass-decomposable monomial ideals}

\author[R. Jafari]{Raheleh Jafari}
\address{Mosaheb Institute of Mathematics, Kharazmi Universtity, Tehran, Iran.}
\email{rjafari@ipm.ir}

\author[A. A. Yazdan Pour]{{Ali Akbar} {Yazdan Pour}}
\address{Department of Mathematics, Institute for Advanced Studies in Basic Sciences (IASBS), 	P.O.Box 45195-1159, Zanjan, Iran.}
\email{yazdan@iasbs.ac.ir}

\date{March 8, 2021}

\keywords{shedding vertex, ass-decomposable ideal, sequentially Cohen-Macaulay, Stanley inequality}
\subjclass[2010]{13F55, 05E45}

\begin{document}
\maketitle

\begin{abstract}
The shedding vertices of simplicial complexes are studied from an algebraic point of view. Based on this perspective, we introduce the class of ass-decomposable monomial ideals which is a generalization of the class of Stanley-Reisner ideals of vertex decomposable simplicial complexes. The recursive structure of ass-decomposable monomial ideals allows us to find a simple formula for the depth, and in squarefree case, an upper bound for the regularity of such ideals.
\end{abstract}

\section*{Introduction}~\label{intro}
The concept of a (pure) vertex decomposable simplicial complex was introduced by Provan and
Billera \cite{Provan-Billera} in order to find an upper bound for the diameter of a polyhedron in terms of the number of its facets. Later Bj\"orner and Wachs extended this notion to non-pure simplicial complexes \cite{Bjorner}. In \cite{Wachs}, Wachs shows that a vertex decomposable
simplicial complex is shellable and hence is homotopy equivalent to a wedge of spheres \cite[Theorem 4.1]{Bjorner}. Recently Coleman et. al \cite{Coleman} show that if $\Delta$ is a vertex decomposable complex on the vertex set $V$, then there exists an ordering of its ground set $V$ such that adding the revlex smallest missing $(d+1)$-subset of $V$, results in a simplicial complex that is again vertex decomposable. This shows that vertex decomposable complexes are shelling completable and hence satisfy the Simon's Conjecture (see \cite{Coleman} for required definition).  Inspired by the importance of vertex decomposable complexes, in this paper, we introduce a class of monomial ideals called ``ass-decomposable" monomial ideals which provides a generalization of the Stanley-Reisner ideal of vertex decomposable simplicial complexes. Our aim in this direction is to study the algebraic invariants of such ideals. One of the aspects that makes the vertex decomposable simplicial complexes important is that the Stanley-Reisner ring of  a vertex decomposable simplicial complex $\Delta$ is sequentially Cohen-Macaulay. This observation, among other things, implies that $\depth(R/I_\Delta)= \min\{\dim R/\fp \colon \; \fp \in \ass(I_\Delta)\}$ (see observation~\ref{observation}). The natural question is that for what other classes of monomial ideal the depth formula can be obtained as above. We show that if $I$ is ass-decomposable then depth of $R/I$ is the minimum of $\dim R/\fp$ where $\fp \in \ass(I)$ (Theorem~\ref{thm:depth}), though $R/I$ is not sequentially Cohen-Macaulay in general (Example~\ref{ass-decomposable ideal satisfies in stanley conjecture}). In order to introduce the class of ass-decomposable monomial ideal,  first we study shedding vertices of vertex decomposable simplicial complexes from algebraic point of view. Let $I_\Delta$ denote the Stanley-Reisner ideal of a simplicial complex $\Delta$.  It turns out that $v$ is a shedding vertex of $\Delta$ precisely when the minimal prime ideals of $I_\Delta+(x_v)$ are those minimal prime ideals of $I_\Delta$ that contain $x_v$, see Proposition~\ref{controlling associated primes}. 
In view of this characterization of shedding vertices, one of the main results of Section~\ref{sec: shedding vertices} asserts that $\Delta$ is sequentially Cohen-Macaulay, if $\Delta$ has a shedding vertex such that both $\lk_{\Delta}(v)$ and $\del_\Delta(v)$ are sequentially Cohen-Macaulay (cf. Theorem~\ref{seqentially of link and del implies Delta itself}). This gives a direct proof for the well-known implication 
\begin{center}
vertex decomposable $\Longrightarrow$ sequentially Cohen-Macaulay.
\end{center}

Motivated by Proposition~\ref{controlling associated primes}, in Section \ref{sec 2}, we introduce ass-decomposable monomial ideals (cf. Definition~\ref{definition of ass-decomposable}). The squarefree ass-decomposable monomial ideals generated in degree greater than $1$, are nothing but the Stanley-Reisner ideals of vertex decomposable simplicial complexes (cf.~Corollary~\ref{squarefree decomposable}). 
The ass-decomposability, somehow, comes from comparing the associated prime ideals. In this regard, the unmixed property  is very effective for $I$. Indeed ass-decomposability is preserved under taking radical for unmixed ideals (cf. Lemma~\ref{sep-rad}). As an immediate consequence of the  depth formula for ass-decomposable monomial ideal, we derive that  unmixed ass-decomposable monomial ideals are Cohen-Macaulay (cf. Corollary~\ref{unmixed ass-decomposable is CM}).

Recall that a finitely generated  $\ZZ^n$-graded module $M$ is said to satisfy the \textit{Stanley's inequality}, if $\sdepth(M)\geq\depth(M)$, where $\sdepth(M)$ denote the Stanley
depth of $M$.  In fact, Stanley \cite{Stanley-1982} conjectured that all  $\ZZ^n$-graded modules over $n$-dimensional polynomial rings satisfy the Stanley's inequality. This conjecture has been disproved in \cite{Duval}. However, it is still interesting to find  some classes of modules which satisfy Stanley's inequality. For a survey on this topic, we refer the reader to \cite{survey}. The unmixed ass-decomposable monomial ideals satisfy the  Stanley's inequality (cf. Corollary~\ref{ass-decomposable ideal satisfies in stanley conjecture}).

The last part of Section~\ref{sec 2}  is devoted to find an upper bound for the Castelnuovo-Mumford regularity of squarefree ass-decomposable monomial ideals. We show that the regularity of a squarefree ass-decomposable monomial ideal is at most the index of irreducibility of the ideal and we characterize when this upper  bound holds.

\section{Preliminaries}
Throughout this paper $R=\mathbb{K}[x_1,\ldots, x_n]$ denotes the polynomial ring in $n$ variables $x_1, \ldots,x_n$ over a field $\mathbb{K}$ endowed with the standard grading, that is  $\deg (x_i)=1$ and $\mathfrak{m}$ denotes the irrelevant homogeneous maximal ideal of $R$. 
\subsection{Algebraic background}

Let $H_\mathfrak{m}^i(\mbox{-})$ be the $i$th local cohomology functor with support in $\mathfrak{m}$, by which we mean the $i$th right derived functor of $\Gamma_\mathfrak{m}(\mbox{-})$. Recall that $\Gamma_{\mathfrak{m}}(M)=\cup_{n=0}^\infty(0 \mathop{\colon}\limits_M \mathfrak{m}^n)$, for any $R$-module $M$. It turns out that 
\[H_{\mathfrak{m}}^i(M) = \mathop{\lim_{\longrightarrow}}_{n \in \mathbb{N}} \;\mathrm{Ext}^i_R(R/\mathfrak{m}^n, M).\]
The depth and the (Krull) dimension of a finitely generated $R$-module $M$,  can be recognized by the first and the last non-vanishing local cohomology module of $M$, respectively. More precisely,
\begin{align*}
\depth(M)=\min\{i \colon H_{\mathfrak{m}}^i(M) \neq 0\},\\
\dim (M)= \max\{i \colon H_{\mathfrak{m}}^i(M) \neq 0\}.
\end{align*}
The reader is refereedto \cite{Brodmann-Sharp} for more details about local cohomology.

We are interested in interpretation of algebraic properties in terms of associated prime ideals. Recall that an ideal $I\subset R$ has a minimal  primary decomposition $I=\cap^r_{i=1}\fq_i$,
where each $\fq_i$ is a $\fp_i$-primary ideal. Let $\ass(I)=\{\fp_1,\dots,\fp_r\}$ denote 	the  set of associated prime ideals of $R/I$. An ideal $I \subseteq R$ is called \textit{unmixed} if $\dim R/I = \dim R/\fp$ for all $\fp \in \ass(I)$.

For a Cohen-Macaulay $R$-module $M$, that is a finitely generated $R$-module with $\depth (M)=\dim(M)$, we have 
\begin{equation}\label{1}
\depth(M)=\dim R/\fp, \quad \text{ for all } \fp\in\Ass(M)
\end{equation}
(see \cite[Theorem 2.1.2]{BH}).

A related concept is the notion of sequentially Cohen-Macaulay modules. A finitely generated (graded) $R$-module $M$ is called \textit{sequentially Cohen-Macaulay}, if there exists a filtration
\begin{equation} \label{filtration}
0 =M_0 \subset M_1 \subset \cdots \subset M_r =M
\end{equation}
of (graded) submodules of $M$ such that each quotient $M_i/M_{i-1}$ is Cohen-Macaulay and
\[\dim M_1/M_0 < \dim M_2/M_1 < \cdots < \dim M_r/M_{r-1}.\]

The following observation yields a simple method for computing the depth of a sequentially Cohen-Macaulay module and will be used in Theorem~\ref{seqentially of link and del implies Delta itself}.

\begin{observ}\label{observation}
Let $M$ be a finitely generated $d$-dimensional sequentially Cohen-Macaulay $R$-module. By \cite[Lemma 5.4]{Schenzel}, one has
\begin{equation} \label{local cohomology of SCM modules}
H_{\mathfrak{m}}^i \left( M \right) = H_{\mathfrak{m}}^i \left( D_i (M) \right) = H_{\mathfrak{m}}^i \left( D_i (M)/ D_{i-1}(M) \right),
\end{equation}
for all $i=0, \ldots, d$, where $D_i(M)$ denotes the largest submodule of $M$ such that $\dim D_i(M) \leq i$. Note that $D_i (M)/ D_{i-1}(M)$ are either zero or $i$-dimensional Cohen-Macaulay $R$-modules, by \cite[Proposition~1.1 and Corollary~2.7]{Herzog-Popescu}. Since $\Ass(M)= \bigcup_{i=1}^d \Ass(D_i (M)/ D_{i-1}(M))$ (\cite[Corollary 2.3]{Schenzel}), as a consequence of (\ref{1}) and (\ref{local cohomology of SCM modules}), we have

\begin{equation} \label{depth of SCM modules}
\begin{split}
\depth (M) & = \min \{i \colon \; H_{\mathfrak{m}}^i \left( M \right) \neq 0 \}  \\
&=\min\{i \colon H_{\mathfrak{m}}^i(D_i (M)/ D_{i-1}(M)) \neq 0\}\\
&=\min\{\depth\, (D_i (M)/ D_{i-1}(M)) \colon \; D_i (M)/ D_{i-1}(M) \neq 0\}\\
& = \min \{\dim R/\fp \colon \; \fp \in \Ass(M) \}.
\end{split}
\end{equation}
\end{observ}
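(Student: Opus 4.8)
The plan is to compute $\depth(M)$ through the first non-vanishing local cohomology module and to transport that information to the Cohen-Macaulay quotients of the dimension filtration $\{D_i(M)\}$. Recall that $\depth(M)=\min\{i\colon H^i_{\mathfrak m}(M)\neq 0\}$, so the entire task reduces to locating the smallest index $i$ with $H^i_{\mathfrak m}(M)\neq 0$ and identifying that index with a quantity read off from $\Ass(M)$.

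First I would invoke Schenzel's structural result \cite[Lemma 5.4]{Schenzel}: for a sequentially Cohen-Macaulay module the local cohomology factors through the dimension filtration, giving $H^i_{\mathfrak m}(M)=H^i_{\mathfrak m}(D_i(M)/D_{i-1}(M))$ for each $i=0,\dots,d$. The point of this step is that it replaces the potentially complicated module $M$ by the much simpler quotients $N_i:=D_i(M)/D_{i-1}(M)$, each of which is, by \cite[Proposition 1.1 and Corollary 2.7]{Herzog-Popescu}, either zero or Cohen-Macaulay of dimension exactly $i$.

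Next I would exploit that a nonzero $i$-dimensional Cohen-Macaulay module $N_i$ has its local cohomology concentrated in a single degree, namely $H^j_{\mathfrak m}(N_i)=0$ for $j\neq i$ while $H^i_{\mathfrak m}(N_i)\neq 0$. Combined with the previous step this yields the clean equivalence $H^i_{\mathfrak m}(M)\neq 0 \iff N_i\neq 0$, whence $\depth(M)=\min\{i\colon N_i\neq 0\}$. Moreover, the Cohen-Macaulayness of $N_i$ feeds into equation~(\ref{1}): every $\fp\in\Ass(N_i)$ satisfies $\dim R/\fp=\depth(N_i)=i$. Thus the index $\min\{i\colon N_i\neq 0\}$ is simultaneously $\depth(M)$ and the common value $\dim R/\fp$ taken by the primes associated to the lowest nonzero quotient.

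Finally I would assemble these observations using the decomposition $\Ass(M)=\bigcup_i\Ass(N_i)$ from \cite[Corollary 2.3]{Schenzel}. Since each $\fp\in\Ass(N_i)$ contributes the value $\dim R/\fp=i$, taking the minimum over all associated primes of $M$ selects exactly the smallest index $i$ with $N_i\neq 0$, which has already been identified with $\depth(M)$; this gives $\depth(M)=\min\{\dim R/\fp\colon \fp\in\Ass(M)\}$. The only genuinely nontrivial input is Schenzel's factorization of local cohomology through the dimension filtration, and I expect that to be the main obstacle were one to prove the statement from scratch; once it is granted, the remaining steps are formal consequences of the Cohen-Macaulay property of the filtration quotients together with the concentration of their local cohomology in a single cohomological degree.
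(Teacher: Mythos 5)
Your proposal is correct and follows essentially the same route as the paper: it rests on the same three inputs (Schenzel's factorization $H^i_{\mathfrak{m}}(M)=H^i_{\mathfrak{m}}(D_i(M)/D_{i-1}(M))$, the Herzog--Popescu fact that the filtration quotients are zero or $i$-dimensional Cohen--Macaulay, and Schenzel's decomposition $\Ass(M)=\bigcup_i\Ass(D_i(M)/D_{i-1}(M))$ together with equation~(\ref{1})), assembled in the same order. Your only addition is spelling out explicitly that local cohomology of a nonzero $i$-dimensional Cohen--Macaulay module is concentrated in degree $i$, which the paper uses implicitly in passing from the second to the third line of \eqref{depth of SCM modules}.
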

	
Let  $I$ be an ideal in the polynomial ring $R$ and $\bigh(I)=\max\{\height(\fp) \colon \; \fp \in  \ass(I)\}$ denotes the big height of $I$. Then  the above observation shows that $\depth(R/I)=n-\bigh(I)$, if $R/I$ is sequentially Cohen-Macaulay.

\subsection{Simplicial complexes and Stanley-Reisner ring}
A \textit{simplicial complex} $\Delta$  on the vertex set $V=V(\Delta)$ is a collection of subsets of $V$, called \textit{faces} of $\Delta$, such that (i) $\{v\} \in \Delta$, for all $v \in V$, (ii) if $F\in\Delta$ and  $G\subseteq F$, then $G\in\Delta$.  The set of maximal faces of $\Delta$ with respect to inclusion, which are called \textit{facets}, is denoted by $\cF(\Delta)$. If $\cF(\Delta) = \{F_1, \ldots, F_m\}$, we often write $\Delta=\langle F_1, \ldots, F_m \rangle$ and $\Delta$ is called a \textit{simplex}, if $\cF(\Delta)$ is a singleton. Also the $i$th pure skeleton of $\Delta$ is the simplicial complex $\Delta^{[i]}= \langle F \in \Delta \colon \; |F|=i \rangle$.

All over this paper, $\Delta$ is  a simplicial complex on $V=[n]$. For each subset $F\subseteq[n]$ we set
\[\x_F=\prod_{i\in F}x_i.\]
The \textit{Stanley-Reisner ideal} of $\Delta$ is the ideal $I_\Delta$ of $R$, which is generated
by those squarefree monomials $\x_F$ with $F\notin\Delta$. A simplicial complex $\Delta$ is called \textit{Cohen-Macaulay} if the quotient ring $\mathbb{K}[\Delta]=R/I_\Delta$ is Cohen-Macaulay and $\Delta$ is called \textit{sequentially Cohen-Macaulay}, if all pure $i$-skeletons of $\Delta$ are Cohen-Macaulay. It turns out that $\Delta$ is sequentially Cohen-Macaulay if and only if the quotient ring $R/I_\Delta$ is sequentially Cohen-Macaulay \cite[Theorem 3.3]{Duval 2}.

For a subset $F\subseteq[n]$, let $\bar{F}=[n]\setminus F$ and $P_{\bar{F}}$ be the prime ideal $(x_i \colon \; i\in [n] \setminus F)$. It follows from \cite[Theorem 5.1.4]{BH} that
\begin{equation*} \label{prime decomposition of I_Delta}
I_\Delta=\bigcap_{F\in\cF(\Delta)}P_{\bar{F}},
\end{equation*}
is a minimal prime decomposition of $I_\Delta$.

\subsection{Graphs and edge ideals}
A simple graph is a pair $G=(V,E)$ where $V$ is a set whose elements are called \textit{vertices} and $E$ is a collection of $2$-subsets of $V$. Any element of $E$ is called an \textit{edge} of $G$. All graphs in this paper are supposed to be finite graph in the sense that $|V|<\infty$. The \textit{edge ideal} of a graph $G=(V,E)$ is the quadratic square free monomial ideal
\[I(G)=(x_vx_w \colon \{v,w\} \in E),\]
considered as an ideal in the polynomial ring $\mathbb{K}[x_v \colon v \in V]$. For any graph $G$, there are two typical instances of simplicial complexes arising from $G$, namely the independence complex and clique complex. A subset $A \subseteq [n]$ is called an \textit{independent set} if $e \nsubseteq A$, for all $e \in E$. The collection of independent sets in $G$ forms a simplicial complex $\Delta_G$ which is called the \textit{independence complex} of $G$. One may easily check that $I_{\Delta_G}=I(G)$. Alternatively, a subset $A \subseteq [n]$ is called a \textit{clique} in $G$ if $\{v,w\} \in E$ for all $v,w \in A$ with $v \neq w$. The collection of cliques in $G$ forms a simplicial complex $\Delta(G)$ which is called the \textit{clique complex} of $G$.

Chordal graphs have been studied extensively in combinatorial commutative algebra. A \textit{cycle} of length $r$ in a graph $G=(V,E)$ is a sequence $v=v_1,v_2,\ldots,v_{r-1},v_r=v$ of vertices of $G$ such that $\{v_i, v_{i+1}\} \in E$ and a graph $G$ is \textit{chordal} if each cycle of length four or more has a chord, by which we mean an edge joining two vertices that are not adjacent in the cycle. It is known that $\Delta_G$ is sequentially Cohen-Macaulay, if $G$ is a chordal graph \cite[Theorem 3.2]{Francisco-Van Tuyl}. Trivial examples of chordal graphs are \textit{forests}, the graphs without any cycle. 

Let $G=(V,E)$ be a finite simple graph. If $v$ is a vertex of $G$, the \textit{neighborhood} of $v$ is the set $N_G(v)=\{w \in V\colon \; \{v,w\} \in E\}$ while the \textit{closed neighborhood} of $v$ is the set $N_G[v]=\{v\} \cup N_G(v)$. The number $\deg(v)=|N_G(v)|$ is called the \textit{degree} of $v$ and $v$ is called a \textit{pendant} vertex if $\deg(v)=1$. Note that every forest has a pendant vertex. Finally, by graph $G\setminus v$, we mean the graph $G\setminus v=(V\setminus\{v\},E')$ where $E'=\{e \in E \colon \; v \notin e\}$ and $G\setminus A$ stands for the graph $G\setminus v_1 \setminus \cdots \setminus v_r$, where $A=\{v_1, \ldots, v_r \} \subseteq V$.

\subsection{Vertex decompasable simplicial complexes and shellability}
Vertex decomposable simplicial complexes form a well-behaved class of simplicial complexes and have been studied in several literatures (\cite{Biermann1, Biermann 2, Cook-Nagel, Dochtermann, KM, MK, Moradi-Kiani, RY, Van Tuyl, Woodroofe, Woodroofe2}). This class of simplicial complexes are defined recursively in terms of shedding vertices. Recall that the \textit{link} of a face $F$ in $\Delta$ is defined as
\[\lk_{\Delta}(F)=\{G\in\Delta \colon \;  G\cap F=\varnothing, \, G\cup F\in\Delta\},\]
and the \textit{deletion} of $F$ is the simplicial complex
\[\del_{\Delta}(F)=\{G\in\Delta \colon \;  G \cap F = \varnothing\}.\]
A vertex $v \in V(\Delta)$ is called a \textit{shedding} vertex of $\Delta$ if any facet of $\del_\Delta(v)$ is a facet of $\Delta$. It turns out that $v$ is a shedding vertex of $\Delta$ if and only if no face of $\lk_\Delta\left( v \right)$ is a facet of $\del_\Delta(v)$. A simplicial complex $\Delta$ is called \textit{vertex decomposable}, if either it is a simplex or else there
is a shedding vertex $v\in V(\Delta)$ such that both $\lk_{\Delta}(v)$ and $\del_\Delta(v)$ are vertex decomposable.

\begin{exmps}
\mbox{}
\begin{itemize}
\item[(i)] Let $\Delta=\langle\{1,2,3\},\{2,3,4\}\rangle$. Then $\lk_\Delta(1)=\langle\{2,3\}\rangle$ and $\del_\Delta(1)=\langle\{2,3,4\}\rangle$ are simplexes. As $\{2,3,4\}\in\cF(\Delta)$ is the only facet of $\del_\Delta(1)$, the vertex $1$ is a shedding vertex of $\Delta$.  Therefore, $\Delta$ is vertex decomposable.
\item[(ii)] Let $G$ be a forest and $v$ be a pendant vertex of $G$. Let $w$ be a neighbor of $v$ in $G$. One can easily check that $w$ is a shedding vertex of $\Delta:=\Delta_G$. Moreover we have
\begin{align*}
\lk_\Delta(w)&= \Delta_{G\setminus N_G[w]},\\
\del_\Delta(w)&=\Delta_{G\setminus w}
\end{align*}
are the independence complexes of some forests. Using induction, both $\lk_\Delta(w)$ and $\del_\Delta(w)$ are vertex decomposable. Hence $\Delta$ is vertex decomposable.
\end{itemize}
\end{exmps}

A simplicial complex $\Delta$ is called \textit{shellable} if there is an ordering $F_1, \ldots, F_m$ of the facets of $\Delta$ such that the intersection $\langle F_i \rangle \cap \langle F_1, \ldots, F_{i-1} \rangle$ is generated by some maximal proper subsets of $F_i$, for $i=1, \ldots, m$. In \cite[Lemma 6]{Wachs} it is shown that if $\Delta$ has a shedding vertex such that both $\lk_{\Delta} (v)$ and $\del_\Delta(v)$ are shellable, then $\Delta$ is shellable. It turns out that the vertex decomposable simplicial complexes are shellable and hence the Stanley-Reisner ring of such simplicial complexes are sequentially Cohen-Macaulay \cite[Page 87]{Stanley-book}. Based on the above arguments we have the following well-known implications which both of them are known to be strict.
\begin{center}
vertex decomposable $\Longrightarrow$ shellable $\Longrightarrow$ sequentially Cohen-Macaulay.
\end{center}
The following example summarizes some known classes of vertex decomposable simplicial complexes.

\begin{exmps}[Some known classes of vertex decomposable simplicial complexes]
\mbox{}
\begin{itemize}
\item[(i)] The independence complex of a chordal graph is vertex decomposable (see {\cite[Corollary~7]{Woodroofe} or \cite[Theorem 4.1]{Dochtermann}}). More generally, If $G$ is a graph with no chordless cycles of length other than 3 or 5, then $\Delta_G$ is vertex decomposable \cite[Theorem 1]{Woodroofe}.
\item[(ii)] If $\Delta$ is a simplicial complex on the vertex set $[n]$, the \textit{Alexander dual} $\Delta^\vee$ of $\Delta$ is a simplicial complex defined as follows:
\[\Delta^\vee =\{[n]\setminus F \colon F \notin \Delta\}.\]
If $G$ is a chordal graph then $\Delta(G)^\vee$ is vertex decomposable  \cite[Corollary 2.6]{FM}. 
\item[(iii)] Let $\Delta$ be a vertex decomposable simplicial complex. Then 
\begin{itemize}
\item[(a)] the $i$th skeleton $\Delta^{(i)}$ of $\Delta$ defined as $\Delta^{(i)}:= \{ F \in \Delta \colon\; \dim F \leq i\}$ is vertex decomposable for all $i$ \cite[Lemma 3.10]{Woodroofe2}. In particular all (pure) skeleton of a simplex is vertex decomposable.
\item[(b)] $\lk_\Delta(F)$ is vertex decomposable, for any $F \in \Delta$ \cite[proposition 2.3]{Provan-Billera}.
\item[(c)] If $\Delta'$ is another vertex decomposable simplicial complex, then $\Delta \star \Delta'=\{ F \cup F' \colon \; F \in \Delta, \; F' \in \Delta'\}$ is also vertex decomposable \cite[Proposition 2.4]{Provan-Billera}.
\end{itemize}
\item[(iv)] Every simplicial complex whose the geometric realization is homeomorphic to $2$-ball or $2$-sphere is vertex decomposable \cite[Theorem 3.1.3]{Provan-Billera}.
\item[(v)] If $\Delta$ is a $d$-dimensional shellable simplicial complex on $d + 3$ vertices, then
$\Delta$ is vertex decomposable \cite[Theorem 4.4]{Coleman}.
\end{itemize}
\end{exmps}

\section{Shedding vertices of simplicial complexes} \label{sec: shedding vertices}
In this Section, we study the shedding vertices of a simplicial complex from algebraic point of view. Indeed the following proposition yields an algebraic counterpart of shedding vertices and will be useful in Section~\ref{sec 2} for introducing a class of monomial ideals as a generalization of Stanley-Reisner ideals of vertex decomposable simplicial complexes for non-squarefree cases.

\begin{prop}\label{controlling associated primes}
The following statements are equivalent for a vertex $v$ of a simplicial complex $\Delta$.
\begin{itemize}
\item[\rm (i)] $v$ is a shedding vertex of $\Delta$;
\item[\rm (ii)] $\mathrm{ass}\left( I_{\Delta}, x_v \right)= \left\{ \mathfrak{p} \in \mathrm{ass}\left( I_{\Delta} \right) \colon \; x_v \in \mathfrak{p} \right\}$. 
\end{itemize}
\end{prop}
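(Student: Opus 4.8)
The plan is to reduce statement (ii) to a purely combinatorial identity about facets, using the prime decomposition $I_\Delta=\bigcap_{F\in\cF(\Delta)}P_{\bar F}$ recalled above. The crucial first observation is that $I_\Delta+(x_v)$ is itself a squarefree monomial ideal --- indeed it is the Stanley--Reisner ideal of the deletion, $I_\Delta+(x_v)=I_{\del_\Delta(v)}$. This is seen on generators: a squarefree monomial $\x_F$ lies in $I_\Delta+(x_v)$ exactly when $F\notin\Delta$ or $v\in F$, which is precisely the condition $F\notin\del_\Delta(v)$. Since both ideals are radical, their associated primes coincide with their minimal primes, so the facet decomposition gives
\[
\ass(I_\Delta)=\{P_{\bar G}\colon G\in\cF(\Delta)\},\qquad \ass(I_\Delta,x_v)=\{P_{\bar F}\colon F\in\cF(\del_\Delta(v))\}.
\]

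Next I would record two elementary facts about the assignment $F\mapsto P_{\bar F}$. It is injective, so equality of two sets of primes of this form is equivalent to equality of the underlying sets of faces; and $x_v\in P_{\bar G}$ holds if and only if $v\in\bar G$, i.e.\ $v\notin G$. Consequently the right-hand side of (ii) unwinds to $\{P_{\bar G}\colon G\in\cF(\Delta),\ v\notin G\}$, and (ii) becomes equivalent to the combinatorial identity $\cF(\del_\Delta(v))=\{G\in\cF(\Delta)\colon v\notin G\}$.

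The last step is to identify this facet identity with the shedding condition. One inclusion is automatic: if $G$ is a facet of $\Delta$ with $v\notin G$, then $G\in\del_\Delta(v)$ and it remains maximal there, because any face of $\del_\Delta(v)$ containing $G$ is a face of $\Delta$ containing the facet $G$ and hence equals $G$. For the reverse inclusion, note that no face of $\del_\Delta(v)$ contains $v$, so a facet of $\del_\Delta(v)$ automatically avoids $v$; the identity therefore holds if and only if every facet of $\del_\Delta(v)$ is a facet of $\Delta$, which is exactly the definition of $v$ being a shedding vertex. Chaining these equivalences yields (i)$\Leftrightarrow$(ii).

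I expect the only point requiring care to be the twofold reduction carried out in the first two paragraphs: the identification $I_\Delta+(x_v)=I_{\del_\Delta(v)}$ and the passage from \emph{equality of prime sets} to \emph{equality of facet sets}. The essential leverage is radicality, which forces $\ass$ to equal the set of minimal primes and so lets the facet-to-prime dictionary apply verbatim; once that is in place, the remaining comparison of maximal faces is routine.
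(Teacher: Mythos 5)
Your proof is correct, but it is organized around a different pivot than the paper's. The paper fixes the minimal prime decomposition $I_\Delta=\cap_{i=1}^{r}\fp_i$ with $\fp_i=P_{\bar{F}_i}$, orders it so that $x_v\in\fp_i$ exactly for $i\leq s$, and proves (i)$\Rightarrow$(ii) by an absorption argument: for $i>s$ the shedding hypothesis, used in its link form (the face $F_i\setminus\{v\}$ of $\lk_\Delta(v)$ is not a facet of $\del_\Delta(v)$), yields some $t\leq s$ with $\fp_t\subseteq(\fp_i,x_v)$, so that in $(I_\Delta,x_v)=\left(\cap_{j\leq s}\fp_j\right)\cap\left(\cap_{i>s}(\fp_i,x_v)\right)$ the second block of components is redundant; the implication (ii)$\Rightarrow$(i) reverses the same containments. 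Your pivot is instead the \emph{unconditional} identification of $(I_\Delta,x_v)$ as the Stanley--Reisner ideal of the deletion (the identity $(I_\Delta,x_v)=I_{\del_\Delta(v)}+(x_v)$ recorded in Remark~\ref{rem-2}(3), but established by your generator computation with no shedding hypothesis), which gives $\ass(I_\Delta,x_v)=\{P_{\bar{F}}\colon F\in\cF(\del_\Delta(v))\}$ once and for all; by injectivity of $F\mapsto P_{\bar{F}}$, condition (ii) then collapses to the facet identity $\cF(\del_\Delta(v))=\{G\in\cF(\Delta)\colon v\notin G\}$, which is equivalent to $v$ being a shedding vertex because the inclusion $\supseteq$ holds automatically. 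Your route buys symmetry---both implications drop out of one identity plus the definition of shedding---and it avoids the link and the prime-containment bookkeeping; the paper's route buys exactly those explicit containments $\fp_t\subseteq(\fp_i,x_v)$ and the link--deletion dictionary, which it reuses throughout Remark~\ref{rem-2} and which is the template for the non-squarefree theory (Definition~\ref{definition of ass-decomposable}, Lemma~\ref{sep-rad}), where $(I,x^k)$ is not the Stanley--Reisner ideal of any deletion and associated primes really must be compared prime by prime. Two conventions are worth making explicit in your write-up: an ideal generated by squarefree monomials is determined by the squarefree monomials it contains (this is why checking generators suffices for the identification), and for $F\in\cF(\del_\Delta(v))$ the complement in $P_{\bar{F}}$ must be taken inside $[n]$, not inside $V\setminus\{v\}$, so that each such prime does contain $x_v$.
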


\begin{proof}
	Let  $I_\Delta=\cap_{i=1}^r \mathfrak{p}_i$ be  a minimal prime decomposition of $I_\Delta$. We may assume that $x_v \in \cap^s_{i=1} \mathfrak{p}_i$ and $x_v \notin \cup^r_{i=s+1} \mathfrak{p}_i$, for some $1 \leq s \leq r$. For all $1 \leq i \leq r$, let $\fp_i=P_{\bar{F}_i}$ where $F_i\in\Delta$.
	
	(i)$\implies$(ii) Fix $s+1 \leq i \leq r$. Since $v \in F_i$ is a shedding vertex of $\Delta$, we conclude that $F_i \setminus \{v\} \in \mathrm{link}_\Delta\left( v \right)$ is not a facet of $\mathrm{del}_{\Delta} (v)$. Hence there exists $G \in \mathcal{F} \left( \mathrm{del}_{\Delta} (v) \right)$ such that $F_i \setminus \{v\} \subset G$. Indeed, $G \in \mathcal{F} \left( \Delta \right)$ and $v \notin G$. Thus $G = F_t$, for some $1 \leq t \leq s$. Moreover, $\bar{F}_t = \bar{G} \subseteq \bar{F_i} \cup \{v\}$ and consequently $\mathfrak{p}_t = \mathfrak{p}_{\bar{F}_t} \subseteq\left(\mathfrak{p}_i, x_v \right)$. This shows that
	\[
	\left(I_\Delta,x_v \right) = \left( \mathop{\cap}^s_{j=1} \mathfrak{p}_j \right) \cap \left( \mathop{\cap}^r_{i=s+1} \left( \mathfrak{p}_i,x_v \right) \right) = \mathop{\cap}^s_{j=1} \mathfrak{p}_j,
	\]
is a minimal prime decomposition of $I_\Delta+(x_v)$.

	(ii)$\implies$(i) Our hypothesis implies that for all $s+1 \leq i \leq r$, there exists $1 \leq j_i \leq s$ such that $\mathfrak{p}_{j_i} \subseteq \left( \mathfrak{p}_i, x_v \right)$. Translating this in terms of facets of $\Delta$, we conclude that no facet of $\mathrm{link}_\Delta\left( v \right)$ is a facet of $\del_{\Delta}(v)$. 
\end{proof}

For a vertex $v$ of $\Delta$, let $I_\Delta(v)=(x_w \colon\; x_vx_w\in I_\Delta )$. The following remark enables us to find the minimal primary decomposition of $I_\Delta \colon x_v$ and $I_\Delta+(x_v)$ in terms of those of $I_\Delta$, when $v$ is a shedding vertex of $\Delta$.

\begin{rem}\label{rem-2}
Let $v$ be a shedding vertex of  $\Delta$ and $I_\Delta=\cap^r_{i=1}\fp_i$ be a minimal prime decomposition of $I_\Delta$. Assume without loss of generality that $x_v\in\cap^s_{i=1}\fp_i$ and $x_v\notin\cup^r_{i=s+1}\fp_i$ for some $1\leq s\leq r$. Let $\fp_i=P_{\bar{F_i}}$ for $F_i\in\cF(\Delta)$ and $i=1,\dots,r$. Then $v\notin \cup^s_{i=1}F_i$ and $v\in\cap^r_{i=s+1}F_i$. For $i=s+1,\dots,r$, let  $Q_i=F_i\setminus\{v\}$. Then 	
\begin{enumerate}
\item $\del_\Delta(v)= \langle F_1,\dots,F_s \rangle$.
\item $\lk_\Delta(v)=\langle Q_{s+1},\dots,Q_r \rangle$.
\item $(I_\Delta,x_v)=\cap^s_{i=1}\fp_i=I_{\del_\Delta(v)}+(x_v)$.
\item $I_\Delta \colon x_v=\cap^r_{i=s+1}\fp_i=I_{\lk_\Delta(v)}+I_\Delta(v)$.
\item  If $\Delta$ is vertex decomposable, then $\lk_\Delta(w)$ is also vertex decomposable, for any vertex $w$ (cf. \cite[Proposition~3.7]{Woodroofe2}).
\end{enumerate}
\end{rem}

It is known that if $\Delta$ has a shedding vertex such that both $\lk_{\Delta}(v)$ and $\del_\Delta(v)$ are shellable, then $\Delta$ is shellable (\cite[Lemma 6]{Wachs}). Note that the class of shellable simplicial complexes is a subclass of sequentially Cohen-Macaulay complexes, so it is natural to ask if the result holds replacing shellable simplicial complexes with sequentially Cohen-Macaulay complexes. Our algebraic characterization of shedding vertices enables us to obtain the later result for sequentially Cohen-Macaulay complex as well (cf. Theorem~\ref{seqentially of link and del implies Delta itself}). 
The following theorem shows the importance of shedding vertices in inductive processes for squarefree monomial ideals.

\begin{thm} \label{seqentially of link and del implies Delta itself}
Let $\Delta$ be a simplicial complex and $I=I_\Delta$ be the Stanley-Reisner ideal of $\Delta$. If $\Delta$ has a shedding vertex $v$ such that $\mathrm{link}_\Delta(v)$ and $\del_\Delta (v)$ are sequentially Cohen-Macaulay/Cohen-Macaulay/shellable, then
\begin{itemize}
\item[\rm (i)] $\depth \left( R/I \right) = \min \left\{ \depth \left( R/ \left(I \colon x_v \right) \right), \, \depth\left( R/ \left( I,x_v \right) \right) \right\}= n-\bigh(I)$;
\item[\rm (ii)] $\Delta$ is sequentially Cohen-Macaulay/Cohen-Macaulay/shellable.
\end{itemize}
\end{thm}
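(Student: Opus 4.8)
The plan is to prove both parts simultaneously by induction, using the recursive structure that shedding vertices impose on the Stanley–Reisner ideal. The key inputs are Remark~\ref{rem-2} and Proposition~\ref{controlling associated primes}. Recall from Remark~\ref{rem-2} that when $v$ is a shedding vertex, we have $(I,x_v)=I_{\del_\Delta(v)}+(x_v)$ and $I\colon x_v = I_{\lk_\Delta(v)}+I_\Delta(v)$, so the quotient rings $R/(I,x_v)$ and $R/(I\colon x_v)$ are (up to adjoining or deleting variables that appear linearly) the Stanley–Reisner rings of $\del_\Delta(v)$ and $\lk_\Delta(v)$. Since by hypothesis these two smaller complexes are sequentially Cohen-Macaulay (resp. Cohen-Macaulay/shellable), their depths are governed by Observation~\ref{observation}, giving $\depth(R/(I,x_v))=n-\bigh(I_{\del_\Delta(v)})$ and an analogous identity for the link.

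First I would establish the depth formula in (i). The standard tool is the short exact sequence
\[
0\longrightarrow R/(I\colon x_v)\xrightarrow{\;\cdot x_v\;} R/I \longrightarrow R/(I,x_v)\longrightarrow 0,
\]
from which the Depth Lemma yields $\depth(R/I)\geq \min\{\depth(R/(I\colon x_v)),\depth(R/(I,x_v))\}$, with equality whenever the two outer depths differ, and in the equal case one must rule out a jump of exactly one. Here the shedding hypothesis, via Proposition~\ref{controlling associated primes}, is precisely what prevents the pathological behaviour: it guarantees that $\ass(I,x_v)\subseteq\{\fp\in\ass(I)\colon x_v\in\fp\}$, so every associated prime of $R/(I,x_v)$ is already an associated prime of $R/I$, and likewise the associated primes of $R/(I\colon x_v)$ are associated primes of $R/I$. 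Translating the depth of each outer term into a big-height via Observation~\ref{observation}, and comparing these big-heights against $\bigh(I)=\max\{\height\fp\colon \fp\in\ass(I)\}$, forces $\depth(R/I)=n-\bigh(I)=\min\{\depth(R/(I\colon x_v)),\depth(R/(I,x_v))\}$.

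Next I would prove (ii), that $\Delta$ inherits the sequential Cohen–Macaulayness. The shellable case is already \cite[Lemma 6]{Wachs}, so the real content is the sequentially Cohen–Macaulay case (and the Cohen–Macaulay case follows once one also tracks dimensions). The cleanest route is to use the filtration characterization: one assembles a Cohen–Macaulay filtration of $R/I$ from the given filtrations of $R/(I,x_v)\cong \mathbb{K}[\del_\Delta(v)]$ and $R/(I\colon x_v)\cong \mathbb{K}[\lk_\Delta(v)][x_v]$ (the extra variable raising each dimension by one), splicing them together through the short exact sequence above. The shedding condition, encoded in the clean primary-decomposition identity $(I,x_v)=\cap_{i=1}^s\fp_i$ from Remark~\ref{rem-2}(3), ensures that the associated primes line up so that $\Ass(R/I)=\Ass(R/(I\colon x_v))\cup\Ass(R/(I,x_v))$ and no dimensions are ``lost,'' allowing the two filtrations to be merged into a single one with strictly increasing dimensions of successive Cohen–Macaulay quotients. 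Alternatively, one can argue skeleton-by-skeleton using the pure-skeleton criterion for sequential Cohen–Macaulayness together with the compatibility of links and deletions with skeletons.

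The main obstacle I anticipate is the equality (rather than mere inequality) in the depth formula of part (i): the Depth Lemma alone gives only a one-sided bound, and in the degenerate case where the two outer depths coincide one must exclude the possibility that $\depth(R/I)$ exceeds their common value by one. This is exactly where Proposition~\ref{controlling associated primes} must be deployed carefully—the shedding hypothesis controls $\Ass(R/(I,x_v))$ so that an associated prime of minimal dimension survives into $R/I$, pinning $\depth(R/I)$ down to $n-\bigh(I)$ and preventing any upward jump. Getting this associated-prime bookkeeping exactly right, and verifying that the splicing of filtrations in (ii) produces strictly increasing dimensions, will require the full strength of the identities in Remark~\ref{rem-2}.
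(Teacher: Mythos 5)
Your part (i) is essentially the paper's own argument: the short exact sequence plus the depth lemma gives $\depth(R/I)\geq\min\{\depth R/(I\colon x_v),\,\depth R/(I,x_v)\}$, and the reverse inequality comes from combining Proposition~\ref{controlling associated primes} (which gives $\ass(I,x_v)\subseteq\ass(I)$, while $\ass(I\colon x_v)\subseteq\ass(I)$ is clear and the union is all of $\ass(I)$) with the depth formula of Observation~\ref{observation} for the two outer terms and the standard bound $\depth(R/I)\leq\dim R/\fp$ for every $\fp\in\ass(I)$. One small correction: the degenerate case of the depth lemma is not when the two outer depths are equal, but when $\depth R/(I\colon x_v)=\depth R/(I,x_v)+1$ (the submodule's depth exceeding the quotient's by one), and the jump can then be arbitrarily large; this is harmless here only because the associated-prime argument pins down the upper bound unconditionally, which is exactly how the paper proceeds.

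The genuine gap is in part (ii). Your primary route --- splicing the Cohen--Macaulay filtrations of $R/(I\colon x_v)$ and $R/(I,x_v)$ through the short exact sequence --- does not work as described, and the assertion that the matching of associated primes ``allows the two filtrations to be merged'' is precisely the missing content, not a consequence of anything you have established. Concatenation yields a filtration of $R/I$ with Cohen--Macaulay quotients, but the dimensions run through all pieces of the submodule first and all pieces of the quotient second, so strict increase fails unless $\dim R/(I\colon x_v)$ is smaller than the smallest piece of $R/(I,x_v)$; interleaving instead would require constructing new submodules of $R/I$ that the exact sequence alone does not provide. In fact the principle you invoke (exact sequence with sequentially Cohen--Macaulay ends, $(I,x_v)$ equal to the intersection of the components containing $x_v$, and $\Ass(R/I)=\Ass(R/(I\colon x_v))\cup\Ass(R/(I,x_v))$, implies the middle is sequentially Cohen--Macaulay) is false: in the paper's Example~\ref{ass-decomposable but not SCM}, $R/(I\colon x_1)=R/\fq_2$ is Cohen--Macaulay, $R/(I,x_1)=R/(\fq_1\cap\fq_3)$ is sequentially Cohen--Macaulay (its dimension filtration has quotients $\fq_1/(\fq_1\cap\fq_3)$ and $R/\fq_1$, of dimensions $1<2$), and the associated primes match up exactly as you require, yet $R/I$ is not sequentially Cohen--Macaulay. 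So any correct proof must use the squarefree/shedding structure beyond these formal conditions. That is what the paper does, via the route you mention only in passing as an alternative: by Duval's criterion it suffices to show each pure skeleton $\Delta^{[i]}$ is Cohen--Macaulay, and this follows because $\lk_{\Delta^{[i]}}(v)=(\lk_\Delta(v))^{[i-1]}$ and $\del_{\Delta^{[i]}}(v)=(\del_\Delta(v))^{[i]}$ are Cohen--Macaulay, $v$ remains a shedding vertex of $\Delta^{[i]}$ whenever $\Delta^{[i]}$ is not a simplex, and --- the step your sketch omits --- $\Delta^{[i]}$ is pure, so the depth formula of part (i) applied to $I_{\Delta^{[i]}}$ forces depth to equal dimension there. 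Without these verifications the skeleton argument is also incomplete.
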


\begin{proof}
(i) It is well-known that shellable simplicial complexes or Cohen-Macaulay complexes are sequentially Cohen-Macaulay. So it is enough to prove (i) for sequentially Cohen-Macaulay complexes.
We note that $\left(I, x_v \right) = (x_v)+I_{\del_\Delta (v)}$  is sequentially Cohen-Macaulay complex, by our assumption. Since $\lk_\Delta (v)$ is sequentially Cohen-Macaulay, we derive that  the ideal $(I\colon x_v)$ is sequentially Cohen-Macaulay (see Remark~\ref{rem-2}(4)). Hence by \eqref{depth of SCM modules}, we conclude that
\begin{equation} \label{eq 1}
\begin{split}
 \depth \left( R/(I,x_v) \right) & = \min \{ \dim \left( R/\fp \right)\colon \; \fp\in\ass\left( \left( I, x_v \right) \right)\}, \\
 \depth \left( R/(I \colon x_v) \right) & = \min \{ \dim \left( R/\fp \right)\colon \; \fp\in\ass\left( \left( I \colon x_v \right) \right)\}.
\end{split}
\end{equation}
It follows from Proposition~\ref{controlling associated primes} that $\ass \left( I, x_v \right) \subset \ass(I)$. Clearly, $\ass \left( I \colon x_v \right) \subset \ass(I)$. So that $\ass(I) = \ass \left( I, x_v \right) \cup \ass \left( I \colon x_v \right)$. By \cite[Proposition 1.2.13]{BH}, we have $\depth (R/I) \leq \dim R/\mathfrak{p}$, for all $\mathfrak{p} \in \ass(I)$. Using \eqref{eq 1}, we conclude that 
\begin{equation}\label{eq 2}
\depth \left( R/I \right) \leq  \min \left\{ \depth \left( R/ \left(I \colon x_v \right) \right), \, \depth\left( R/ \left( I,x_v \right) \right) \right\}.
\end{equation}
On the other hand, from the short exact sequence
\begin{equation*} \label{frequenty used short exact sequence}
0\longrightarrow R/(I \colon x_v) \longrightarrow R/I \longrightarrow R/\left( I,x_v \right) \longrightarrow 0,
\end{equation*}
we obtain 
\begin{equation}\label{eq 3}
\depth \left( R/I \right) \geq  \min \left\{ \depth \left( R/ \left(I \colon x_v \right) \right), \, \depth\left( R/ \left( I,x_v \right) \right) \right\}.
\end{equation}
The inequalities \eqref{eq 2} and \eqref{eq 3} yield 
\begin{align*}
\depth \left( R/I \right) &= \min \left\{ \depth \left( R/ \left(I \colon x_v \right) \right), \, \depth\left( R/ \left( I,x_v \right) \right) \right\}\\
&=\min\{\dim (R/\fp) \colon \; \fp \in \ass(I)\} = n-\bigh(I).
\end{align*}

(ii) First we show that $\Delta$ is sequentially Cohen-Macaulay provided that $\mathrm{link}_\Delta(v)$ and $\del_{\Delta}(v)$ are sequentially Cohen-Macaulay. Let $\Delta^{\left[ i \right]}$ denotes the pure $i$-skeleton of the simplicial complex $\Delta$ and $I_i = I_{\Delta^{\left[ i \right]}}$. One may easily check that $\mathrm{link}_{\Delta^{\left[ i \right]}} (v) = \left( \mathrm{link}_{\Delta} (v) \right)^{\left[ i-1 \right]}$ and $\mathrm{del}_{\Delta^{\left[ i \right]}} (v) = \left( \mathrm{del}_{\Delta} (v) \right)^{\left[ i \right]}$. So that $\mathrm{link}_{\Delta^{\left[ i \right]}} (v)$ and $\mathrm{del}_{\Delta^{\left[ i \right]}} (v)$ are both Cohen-Macaulay, by our assumption. If $\Delta^{\left[ i \right]}$ is a simplex, then clearly $\Delta^{\left[ i \right]}$ is Cohen-Macaulay. So assume that $\Delta^{\left[ i \right]}$ is not a simplex. Then it is easily seen that $v$ is a shedding vertex in $\Delta^{\left[ i \right]}$.

Applying part (i) of the statement for the simplicial complex $\Delta^{\left[ i \right]}$, together with the fact that  $R/\left( I_i, x_v \right)$ and $R/\left( I_i\colon x_v \right)$ are Cohen-Macaulay, we find that $\Delta^{\left[ i \right]}$ is Cohen-Macaulay. Hence $\Delta$ is sequentially Cohen-Macaulay. In the case that $\mathrm{link}_\Delta(v)$ and $\del_{\Delta}(v)$ are Cohen-Macaulay, the same discussion as above shows that $\Delta$ is Cohen-Macaulay, while the shellable case follows from \cite[Lemma 6]{Wachs}.
\end{proof}

\section{ass-decomposable monomial ideals} \label{sec 2}
Several algebraic interpretations have been considered for the concept of vertex decomposability. A simplicial complex $\Delta$ is vertex decomposable if and only if $I_{\Delta^\vee}$ is $0$-decomposable in the sense of \cite{RY}, which holds precisely when $I_{\Delta^\vee}$ is a vertex splittable ideal introduced in \cite{MK}. In the following, we are looking for an algebraic property of $I_\Delta$ to characterize the vertex decomposability of $\Delta$. This property is not restricted to squarefree monomial ideals.

\begin{dfn}
\label{definition of ass-decomposable}
A monomial ideal $I$ is called \textit{ass-decomposable}, if either it is a   primary  ideal or else there exist a variable $x_i\notin\sqrt{I}$ and a positive integer $k$ such that
\begin{itemize}
\item[(i)] $\ass(I,x_i^k)=\{\fp\in\ass(I) \ ; \  x_i\in\fp\}$.
\item[(ii)] $(I,x_i^k)$ and $I \colon x_i^k$ are ass-decomposable. 
\end{itemize}
The monomial $x_i^k$ is called a \textit{decomposing monomial} of $I$.
\end{dfn}

\begin{rem}\label{k=1}
Let $I$ be a squarefree ass-decomposable monomial ideal with decomposing monomial $x_i^k$. Let $\fp$ be a minimal prime ideal of $I$ such that $x_i\notin\fp$. Then there exists a minimal prime ideal $\fq$ of $I$ such that $x_i\in\fq$ and $\fq\subseteq\fp+(x_i^k)$. Therefore, $k=1$. 
\end{rem}

As a consequence of Proposition~\ref{controlling associated primes} and Remarks~\ref{rem-2} and \ref{k=1}, we derive the following.
\begin{cor} \label{squarefree decomposable}
 $I_\Delta$ is ass-decomposable if and only if $\Delta$ is vertex decomposable. 
\end{cor}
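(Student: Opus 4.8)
The plan is to prove both implications at once by induction on the number of vertices of $\Delta$ (equivalently, on the number of variables that actually occur), exploiting that a squarefree monomial ideal coincides with its radical. First I would settle the trivial alternatives in the two definitions. Since $I_\Delta$ is squarefree, $I_\Delta$ is primary if and only if it is prime; by the minimal prime decomposition $I_\Delta=\bigcap_{F\in\cF(\Delta)}P_{\bar F}$ this happens exactly when $\cF(\Delta)$ is a singleton, i.e. when $\Delta$ is a simplex. Thus ``$I_\Delta$ primary'' matches ``$\Delta$ a simplex,'' and from now on I assume neither holds.

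For the direction $\Delta$ vertex decomposable $\Rightarrow I_\Delta$ ass-decomposable, I would pick a shedding vertex $v$ with $\lk_\Delta(v)$ and $\del_\Delta(v)$ vertex decomposable. Because $\{v\}\in\Delta$ we have $x_v\notin I_\Delta=\sqrt{I_\Delta}$, so $x_v$ is an admissible candidate and Remark~\ref{k=1} lets me take $k=1$. Proposition~\ref{controlling associated primes} then gives precisely condition (i) of Definition~\ref{definition of ass-decomposable}, while Remark~\ref{rem-2}(3),(4) identifies $(I_\Delta,x_v)=I_{\del_\Delta(v)}+(x_v)$ and $I_\Delta\colon x_v=I_{\lk_\Delta(v)}+I_\Delta(v)$. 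By the induction hypothesis $I_{\del_\Delta(v)}$ and $I_{\lk_\Delta(v)}$ are ass-decomposable, and transferring this along the two identifications yields condition (ii). The converse runs the same way in reverse: a decomposing monomial $x_v^k$ of $I_\Delta$ has $k=1$ by Remark~\ref{k=1}, condition (i) together with Proposition~\ref{controlling associated primes} makes $v$ a shedding vertex, and the ass-decomposability of $(I_\Delta,x_v)$ and $I_\Delta\colon x_v$ descends through the same formulas to $I_{\del_\Delta(v)}$ and $I_{\lk_\Delta(v)}$, so $\del_\Delta(v)$ and $\lk_\Delta(v)$ are vertex decomposable and hence so is $\Delta$.

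The step I expect to be the main obstacle is exactly this \emph{transfer}, which reconciles the fact that ass-decomposability is defined inside one fixed polynomial ring with the fact that $\del_\Delta(v)$ and $\lk_\Delta(v)$ live on fewer vertices. Concretely, $(I_\Delta,x_v)=I_{\del_\Delta(v)}+(x_v)$ adjoins $x_v$ as a variable lying \emph{in} the ideal, whereas $I_\Delta\colon x_v=I_{\lk_\Delta(v)}+I_\Delta(v)$ adjoins $x_v$ as a free (cone) variable and the variables of $I_\Delta(v)$ as further variables lying in the ideal. I would isolate a lemma stating that for a monomial ideal $J\subseteq R'=\KK[x_j\colon j\in S]$ and a new variable $x_v$, each of $JR+(x_v)$ and $JR$ is ass-decomposable in $R=R'[x_v]$ if and only if $J$ is ass-decomposable in $R'$.

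The lemma should follow from two observations. A variable that lies in the radical can never be a decomposing monomial, so it is merely carried along inside every associated prime; this is formalized by $R/(JR+(x_v))\cong R'/J$, under which primariness and the whole set $\ass$ correspond, forcing the recursive decomposition of $JR+(x_v)$ to mirror that of $J$. Adjoining a free variable, on the other hand, commutes with the operations $(-,x_i^k)$ and $(-)\colon x_i^k$ for $i\neq v$ and preserves primality, while $x_v$ itself cannot serve as a decomposing variable (its condition (i) would force $\ass(J)=\varnothing$); so again the decomposition is governed by $J$. Granting this lemma, the induction closes cleanly in both directions and gives the stated equivalence.
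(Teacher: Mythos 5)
Your proof is correct and follows essentially the same route as the paper, which obtains the corollary directly from Proposition~\ref{controlling associated primes} together with Remarks~\ref{rem-2} and \ref{k=1}. The only difference is that you make explicit the induction and the change-of-ambient-ring lemma (relating ass-decomposability of $I_{\del_\Delta(v)}$ and $I_{\lk_\Delta(v)}$ in their smaller polynomial rings to that of $(I_\Delta,x_v)$ and $I_\Delta\colon x_v$ in $R$), a verification the paper leaves implicit.
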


\begin{exmp}\label{sep-not-seq}
Let $I=\cap^4_{i=1}\fq_i$, where $\fq_1=(x_1,x_2)$, $\fq_2=(x_3,x_1)$, $\fq_3=(x_1,x_2^2,x_3^2),\fq_4=(x_1,x_3^2,x_4), \fq_5=(x_3^2,x_4,x_5)$ in $R=\KK[x_1,\dots,x_5]$. Then 
\begin{eqnarray*}
I+(x_3^2)=\mathop{\cap}\limits^5_{i=2}\fq_i & ; & I\colon x_3^2=\fq_1\\
\left( \mathop{\cap}\limits^5_{i=2}\fq_i \right) +(x_4)=\fq_4\cap \fq_5 & ;& \left( \mathop{\cap}\limits^5_{i=2}\fq_i \right) \colon x_4=\fq_2\cap\fq_3\\
(\fq_2\cap\fq_3)+(x_2^2)=\fq_3 & ; & (\fq_2\cap\fq_3) \colon x_2^2=\fq_2\\
(\fq_4\cap\fq_5)+(x_5)=\fq_5 & ;& (\fq_4\cap\fq_5) \colon x_5=\fq_4
\end{eqnarray*}
Therefore $I$ is ass-decomposable.
\end{exmp}

Let $I \subseteq R$ be an ass-decomposable monomial ideal. Since the Stanley-Reisner ideal of a vertex decomposable simplicial complex is sequentially Cohen-Macaulay, it follows from Corollary~\ref{squarefree decomposable} that $R/I$ is sequentially Cohen-Macaulay, provided that $I$ is a squarefree ass-decomposable monomial ideal. The following example shows that this is not the case if $I$ is not squarefree. This example also shows that the radical of an ass-decomposable ideal need not to be an ass-decomposable ideal.

\begin{exmp}\label{ass-decomposable but not SCM}
	Let $I=\cap^3_{i=1}\fq_i$, where $\fq_1=(x_1,x_2)$, $\fq_2=(x_3,x_4)$ and $\fq_3=(x_1,x_3^2,x_4)$ in $R=\KK[x_1,x_2,x_3,x_4]$. Then 
	\begin{eqnarray*}
		I+(x_1)=\fq_1\cap\fq_3 & ; & I\colon x_1=\fq_2\\
		\left(\fq_1\mathop{\cap}\fq_3 \right)+(x_2)=\fq_1 & ;& \left( \fq_1\cap\fq_3 \right) \colon x_2=\fq_3
	\end{eqnarray*}
 Therefore $I$ is ass-decomposable. If $R/I$ is sequentially Cohen-Macaulay, then $R/(\fq_1\cap\fq_2)$ should be  Cohen-Macaulay by \cite[Lemma~2.21]{JS}, which is not the case because $\depth\, R/(\fq_1\cap\fq_2)=1 < 2 =\dim R/(\fq_1\cap\fq_2)$. Note that $\sqrt{I}= (x_1,x_2) \cap (x_3,x_4)$ is not  an ass-decomposable monomial ideal. 
\end{exmp}

In the above example, the quadratic monomial in $\fq_3$ allows to get ass-decomposability along with producing an associated prime ideal $\fp_3\supset\fp_2$. More precisely, $\sqrt{I}$ is not ass-decomposable, since $\fp_3$ is omitted from its associated prime ideals.
The following result shows that ass-decomposability of an unmixed monomial ideal implies the ass-decomposability of its radical.

\begin{lem}\label{sep-rad}
If $I$ is an unmixed ass-decomposable monomial ideal, then $\sqrt{I}$ is also ass-decomposable. 
\end{lem}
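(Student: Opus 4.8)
The plan is to argue by induction on the number of variables, or more precisely on the recursive structure defining ass-decomposability, using the decomposing monomial of $I$ to pass to smaller ideals whose radicals are ass-decomposable by the inductive hypothesis. The crucial leverage point is the unmixed hypothesis: it forces the associated primes of $I$ to all have the same height, and Remark~\ref{k=1} shows that for the radical the decomposing exponent must drop to $k=1$. So the first step is to fix a decomposing monomial $x_i^k$ of $I$ and record that $\operatorname{ass}(I,x_i^k)=\{\fp\in\ass(I)\colon x_i\in\fp\}$ together with the fact that $\ass(I)=\ass(I,x_i^k)\cup\ass(I\colon x_i^k)$.

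The main technical content is to relate the associated primes (equivalently, the minimal primes after radicalizing) of $\sqrt{I}$, $\sqrt{(I,x_i^k)}$ and $\sqrt{I\colon x_i^k}$. My plan is to show that $x_i$ is a shedding-type variable for $\sqrt{I}$ in the sense required by Definition~\ref{definition of ass-decomposable}, with decomposing monomial $x_i$ itself. Concretely I would verify that
\[
\operatorname{ass}\bigl(\sqrt{I},x_i\bigr)=\bigl\{\fp\in\operatorname{ass}(\sqrt{I})\colon x_i\in\fp\bigr\},
\]
and that both $(\sqrt{I},x_i)$ and $\sqrt{I}\colon x_i$ are ass-decomposable. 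Since $I$ is unmixed, every associated prime of $I$ is minimal, so $\ass(I)=\ass(\sqrt I)$ and the containments $\sqrt{(I,x_i^k)}\supseteq(\sqrt I,x_i)$ and $\sqrt{I\colon x_i^k}=\sqrt I\colon x_i$ must be analyzed. The first equality to establish is that $\sqrt{I}\colon x_i$ has the same minimal primes as $I\colon x_i^k$, namely $\{\fp\in\ass(I)\colon x_i\notin\fp\}$, which follows because radical commutes with colon by a prime-free element and unmixedness prevents embedded components from appearing. The second is that the minimal primes of $(\sqrt I,x_i)$ are exactly $\{\fp\in\ass(I)\colon x_i\in\fp\}$; here condition (i) of ass-decomposability for $I$ is what guarantees no \emph{new} minimal primes are created and no old ones are lost when we add $x_i$.

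The induction then closes as follows: $(\sqrt I,x_i)$ and $\sqrt I\colon x_i$ are the radicals of $(I,x_i^k)$ and $I\colon x_i^k$ respectively (up to the identifications just made), and these latter two ideals are ass-decomposable by hypothesis. I would need to check they remain \emph{unmixed} so that the inductive hypothesis applies to them; this is where I expect the main obstacle to lie, because colon and sum operations do not preserve unmixedness in general. The resolution should again use condition (i): because $\operatorname{ass}(I,x_i^k)$ consists precisely of the height-maximal primes of $I$ containing $x_i$, and $I\colon x_i^k$ retains exactly the primes not containing $x_i$, each of the two pieces inherits a single height among its associated primes, hence stays unmixed. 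Once unmixedness of the pieces is secured, the inductive hypothesis gives ass-decomposability of $\sqrt{(I,x_i^k)}$ and $\sqrt{I\colon x_i^k}$, and combining this with the verification of condition (i) for $\sqrt I$ with decomposing monomial $x_i$ yields that $\sqrt I$ is ass-decomposable. The base case is immediate: if $I$ is primary then $\sqrt I$ is prime, hence primary, hence ass-decomposable.
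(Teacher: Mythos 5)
Your proposal is correct and follows essentially the same route as the paper's own proof: induct on the decomposition, keep the same decomposing variable $x_i$ (with the exponent dropped to $1$ for the radical), deduce condition (i) for $\sqrt{I}$ from condition (i) for $I$, and obtain condition (ii) from the identifications $(\sqrt{I},x_i)=\sqrt{(I,x_i^k)}$ and $\sqrt{I}\colon x_i=\sqrt{I\colon x_i^k}$ together with the inductive hypothesis applied to the pieces $(I,x_i^k)$ and $I\colon x_i^k$, which stay unmixed because their associated primes are subsets of $\ass(I)$. The paper inducts on $|\ass(I)|$ rather than on the recursion depth and leaves the unmixedness of the pieces implicit (you spell it out), but these differences are cosmetic.
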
  	
\begin{proof}
Let $I=\cap^r_{i=1}\fq_i$ be a minimal primary decomposition of $I$, where $\fq_i$ is $\fp_i$-primary. As $I$ is unmixed, $\fp_i \nsubseteq \fp_j$ for all $i \neq j$. It follows that $\sqrt{I}=\cap^r_{i=1}\fp_i$ is a minimal prime decomposition of $\sqrt{I}$. 

We use induction on $r=|\ass(I)|$ to show that $\sqrt{I}$ is ass-decomposable. If $r=1$, we have nothing to prove. So assume that $r>1$ and the assertion holds for any unmixed ass-decomposable monomial ideal whose number of irreducible components is less than $r$. Let the variable $x \notin \sqrt{I}$ and the positive number $k$ be as in the Definition~\ref{definition of ass-decomposable}. 
Without loss of generality, assume that $x \in \cap_{i=1}^t \fp_i \setminus \cup_{j=t+1}^r \fp_j$, for some $1 \leq t <r$. Since $\ass(I,x^k)=\{\fp_1, \ldots, \fp_t\}$, we conclude that $\ass (\sqrt{I},x)= \ass(\sqrt{(I,x^k)})=\{\fp_1, \ldots, \fp_t\}$. Note that $I+(x^k)$ and $I \colon x^k=\cap_{i=t+1}^r \fq_i$ satisfy in our induction hypothesis, so that $\sqrt{(I,x^k)}=(\sqrt{I},x)$ and $\sqrt{(I \colon x^k)}=\sqrt{I} \colon x$ are ass-decomposable. Thus $\sqrt{I}$ is ass-decomposable.
\end{proof}	

\begin{rem}
Let $I=\cap^r_{i=1}\fp_i$ be a minimal prime decomposition of  a squarefree monomial ideal $I$, with $r\geq2$. It's not difficult to observe that
	\[
	\fp_j^k\nsubseteq(x_s^t)+\fp_i^k \ \text{and} \ (x_s^t)+\fp_i^k \nsubseteq\fp_j^k,
	\]
for all $1\leq j\neq i\leq r$, $1\leq s\leq n$, $k\geq2$ and $t>1$.
Therefore, $\ass(I,x_s^t)$ has $r$ elements, which follows that the $k$th symbolic power of $I$, for $k\geq2$, is never ass-decomposable. In particular, the converse of Lemma~\ref{sep-rad} does not hold. 
\end{rem}

Let $I \subseteq R$ be an ass-decomposable monomial ideal. In spite of the fact that $R/I$ is not necessarily sequentially Cohen-Macaulay, but the following result shows the depth of $R/I$ can be computed by the same formula as in \eqref{depth of SCM modules}.

\begin{thm}\label{thm:depth}
 If $I$ is an ass-decomposable ideal, then
	 \[
 \depth(R/I)=\min\{\depth(R/\fp)\colon \; \fp\in\ass(I)\}=n-\bigh(I)
	 \]
\end{thm}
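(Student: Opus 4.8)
The plan is to prove the two displayed equalities separately. The second is immediate: for a prime $\fp$ the ring $R/\fp$ is a polynomial ring, hence Cohen--Macaulay with $\depth(R/\fp)=\dim(R/\fp)=n-\height(\fp)$, so
\[
\min\{\depth(R/\fp)\colon \fp\in\ass(I)\}=n-\max\{\height(\fp)\colon\fp\in\ass(I)\}=n-\bigh(I).
\]
For the first equality, write $M(J)=\min\{\depth(R/\fp)\colon \fp\in\ass(J)\}$ for a monomial ideal $J$, and argue by induction on the number of steps used to build $I$ from primary ideals via Definition~\ref{definition of ass-decomposable} (this number is finite precisely because $I$ is ass-decomposable). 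In the base case $I$ is primary, so $R/I$ is Cohen--Macaulay: a primary monomial ideal has all its minimal generators supported on the variables generating $\fp=\sqrt I$ together with a pure power of each such variable, whence $R/I$ is a polynomial extension of an Artinian ring. Since $\ass(I)=\{\fp\}$, this gives $\depth(R/I)=\dim(R/I)=\dim(R/\fp)=M(I)$.

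For the inductive step, let $x=x_i$ and $k$ be as in Definition~\ref{definition of ass-decomposable}, so that $(I,x^k)$ and $I\colon x^k$ are ass-decomposable and built in fewer steps. The heart of the argument is the identity
\[
\ass(I)=\ass(I,x^k)\cup\ass(I\colon x^k).
\]
Condition (i) gives $\ass(I,x^k)=\{\fp\in\ass(I)\colon x\in\fp\}\subseteq\ass(I)$. For the colon operation I would use the isomorphism $R/(I\colon x^k)\xrightarrow{\ \cdot x^k\ } x^k(R/I)$, which realizes $R/(I\colon x^k)$ as a submodule of $R/I$ and hence yields $\ass(I\colon x^k)\subseteq\ass(I)$ at once. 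To see that every $\fp\in\ass(I)$ with $x\notin\fp$ already lies in $\ass(I\colon x^k)$, I would localize at such a $\fp$: there $x$ becomes a unit, so $x^k(R/I)_\fp=(R/I)_\fp$ and $\fp R_\fp\in\Ass((R/I)_\fp)=\Ass(x^k(R/I)_\fp)$, giving $\fp\in\ass(I\colon x^k)$. Combining these inclusions with (i) yields the displayed identity; since both sets on the right are nonempty (indeed $(I,x^k)\subseteq\mathfrak{m}$, while $x\notin\sqrt I$ forces $x^k\notin I$, so $I\colon x^k\neq R$), taking minima gives $M(I)=\min\{M(I,x^k),\,M(I\colon x^k)\}$.

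Finally I would combine this with the short exact sequence
\[
0\longrightarrow R/(I\colon x^k)\xrightarrow{\ \cdot x^k\ } R/I\longrightarrow R/(I,x^k)\longrightarrow 0,
\]
exactly as in the proof of Theorem~\ref{seqentially of link and del implies Delta itself}. The depth inequality for a short exact sequence gives $\depth(R/I)\ge\min\{\depth(R/(I\colon x^k)),\,\depth(R/(I,x^k))\}$, while \cite[Proposition 1.2.13]{BH} gives $\depth(R/I)\le\dim(R/\fp)=\depth(R/\fp)$ for every $\fp\in\ass(I)$, hence $\depth(R/I)\le M(I)$. By the induction hypothesis $\depth(R/(I,x^k))=M(I,x^k)$ and $\depth(R/(I\colon x^k))=M(I\colon x^k)$, so the lower bound reads $\depth(R/I)\ge M(I)$. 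The two bounds force $\depth(R/I)=M(I)$, which together with the first paragraph completes the induction.

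I expect the main obstacle to be the associated-prime bookkeeping in the third paragraph---in particular verifying that the primes of $I$ avoiding $x$ genuinely survive in $I\colon x^k$---together with making the induction well founded by pinning down a measure that strictly decreases along both branches. The depth estimates themselves are routine once the identity $\ass(I)=\ass(I,x^k)\cup\ass(I\colon x^k)$ is in hand.
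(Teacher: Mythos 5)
Your proposal is correct, and its skeleton is the same as the paper's: the same short exact sequence $0\to R/(I\colon x^k)\to R/I\to R/(I,x^k)\to 0$, the same depth inequality for short exact sequences, the same upper bound from \cite[Proposition 1.2.13]{BH}, and the same identity $\ass(I)=\ass(I,x^k)\cup\ass(I\colon x^k)$. Two of your refinements are worth recording, and one is substantive. The paper inducts on $r=|\ass(I)|$ and asserts that both $(I,x^k)$ and $I\colon x^k$ satisfy the induction hypothesis; but for non-squarefree ideals the colon branch need not lower the number of components: $I=(x^2y)=(x^2)\cap(y)$ is ass-decomposable with decomposing monomial $x$, yet $I\colon x=(xy)=(x)\cap(y)$ has the same two associated primes, so the paper's induction hypothesis does not literally apply to it. Your structural induction on the number of steps in Definition~\ref{definition of ass-decomposable} is well founded by the recursive nature of that definition and decreases along both branches by construction, so it quietly repairs this gap. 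Secondly, the paper states $\ass(I)=\ass(I\colon x^k)\cup\ass(I,x^k)$ without proof; your justification --- $\ass(I,x^k)\subseteq\ass(I)$ from condition (i), $\ass(I\colon x^k)\subseteq\ass(I)$ via $R/(I\colon x^k)\cong x^k(R/I)$, and the localization argument showing every $\fp\in\ass(I)$ with $x\notin\fp$ survives in $\ass(I\colon x^k)$ --- is exactly the right one, as is your observation that both sets are nonempty so the minima compare correctly. The rest (base case for primary ideals, final squeeze between the two bounds) matches the paper's reasoning step for step.
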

\begin{proof}
Let $I=\cap_{i=1}^r \fq_i$ be a minimal primary decomposition of the monomial ideal $I \subset R$. 	We proceed by induction on $r=|\ass(I)|$. If $r=1$, then $R/I$ is Cohen-Macaulay and there is nothing to prove. Assume that $r>1$ and the result is true for ass-decomposable monomial ideals with less than $r$ irreducible components. Consider the exact sequence
\begin{equation*}
0\longrightarrow R/(I\colon x^k)\longrightarrow R/I\longrightarrow R/(I,x^k)\longrightarrow0.
\end{equation*}
Then $\depth(R/I)\geq\min\{\depth(R/(I\colon x^k)),\depth(R/(I,x^k))\}$. As $(I\colon x^k)$ and $(I,x^k)$ satisfy the induction hypothesis, and $\ass(I)=\ass(I\colon x^k)\cup\ass(I,x^k)$, we derive 
\[
\depth(R/I)\geq\min\{\depth(R/\fp)\colon \; \fp\in\ass(I)\},
\]
which implies the result together with \cite[Proposition 1.2.13]{BH}. 
\end{proof}

\begin{cor} \label{ass-decomposable ideal satisfies in stanley conjecture}
If $I$ is an unmixed ass-decomposable ideal then 
$$\sdepth(R/I)\geq\depth(R/I).$$
\end{cor}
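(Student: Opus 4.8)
The plan is to exploit the recursive structure of ass-decomposable ideals in exactly the same inductive manner as in Theorem~\ref{thm:depth}, while keeping track of Stanley depth along the short exact sequence. First I would recall the behaviour of Stanley depth under the natural filtration: for the short exact sequence
\[
0\longrightarrow R/(I\colon x^k)\longrightarrow R/I\longrightarrow R/(I,x^k)\longrightarrow0,
\]
there is a known lower bound $\sdepth(R/I)\geq\min\{\sdepth(R/(I\colon x^k)),\,\sdepth(R/(I,x^k))\}$, established by Rauf and used throughout the Stanley-depth literature. I would cite this as the key external input and record it as the Stanley-depth analogue of the depth inequality \eqref{eq 3} used in Theorem~\ref{thm:depth}.

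Next I would set up the induction on $r=|\ass(I)|$. In the base case $r=1$ the ideal $I$ is $\fp$-primary and unmixed, so $R/I$ is Cohen-Macaulay by Theorem~\ref{thm:depth}; since Stanley depth is always at least depth for Cohen-Macaulay modules (indeed Herzog--Vladoiu--Zheng show $\sdepth\geq\depth$ holds for such modules, and one may also invoke that the Stanley inequality is classically known in the Cohen-Macaulay case), the inequality holds trivially. For the inductive step I would apply the decomposing monomial $x^k$ and form the two ideals $(I,x^k)$ and $I\colon x^k$. The subtle point here, and the reason the \emph{unmixed} hypothesis is essential, is that I must verify both of these smaller ideals are again unmixed ass-decomposable so that the induction hypothesis applies to them; the ass-decomposability is immediate from Definition~\ref{definition of ass-decomposable}(ii), and the unmixedness should follow from the associated-prime bookkeeping in condition~(i) together with the argument already carried out in the proof of Lemma~\ref{sep-rad}, where $\ass(I,x^k)=\{\fp_1,\dots,\fp_t\}$ and $I\colon x^k=\cap_{i=t+1}^r\fq_i$ inherit unmixedness from $I$.

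I would then combine the pieces. By induction, $\sdepth(R/(I\colon x^k))\geq\depth(R/(I\colon x^k))$ and $\sdepth(R/(I,x^k))\geq\depth(R/(I,x^k))$. Using Rauf's inequality followed by these two bounds gives
\[
\sdepth(R/I)\geq\min\{\depth(R/(I\colon x^k)),\,\depth(R/(I,x^k))\}.
\]
Finally, by the depth computation in the proof of Theorem~\ref{thm:depth}, the right-hand side equals $\depth(R/I)$, since that theorem establishes $\depth(R/I)=\min\{\depth(R/(I\colon x^k)),\depth(R/(I,x^k))\}$. This closes the induction and yields $\sdepth(R/I)\geq\depth(R/I)$.

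The main obstacle I anticipate is not the depth side, which is handed to us by Theorem~\ref{thm:depth}, but rather pinning down precisely which lower bound for Stanley depth along a short exact sequence is available and applicable to the monomial quotients $R/(I\colon x^k)$ and $R/(I,x^k)$. One must make sure the cited Stanley-depth inequality is stated for $\ZZ^n$-graded modules of the required form and that no additional splitting or freeness hypothesis is secretly needed; if the clean inequality $\sdepth$ of the middle term dominates the minimum of the outer terms is not literally available in the form I want, I would instead argue directly from the definition of Stanley depth by splicing Stanley decompositions of $R/(I\colon x^k)$ and $R/(I,x^k)$ into one for $R/I$, which is the route that makes the unmixed hypothesis do its real work.
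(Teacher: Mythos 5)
Your overall strategy is exactly the paper's: induct on $r=|\ass(I)|$, apply Rauf's inequality to the short exact sequence
\begin{equation*}
0\longrightarrow R/(I\colon x^k)\longrightarrow R/I\longrightarrow R/(I,x^k)\longrightarrow 0,
\end{equation*}
check that $(I,x^k)$ and $I\colon x^k$ are again unmixed ass-decomposable, and use Theorem~\ref{thm:depth} to identify the minimum of the two smaller depths with $\depth(R/I)$. Your unmixedness bookkeeping is sound: ass-decomposability of the two smaller ideals is Definition~\ref{definition of ass-decomposable}(ii), $\ass(I,x^k)\subseteq\ass(I)$ is condition (i), and $\ass(I\colon x^k)\subseteq\ass(I)$ holds because $R/(I\colon x^k)$ embeds in $R/I$; so all associated primes of both ideals have dimension $\dim R/I$. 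Your anticipated obstacle about the Stanley-depth inequality is a non-issue: the paper cites precisely \cite[Lemma~2.2]{Rauf-2010}, which holds for arbitrary short exact sequences of finitely generated $\ZZ^n$-graded modules with no extra splitting hypotheses, and the paper's own phrasing of the final step is even slightly simpler than yours (by Theorem~\ref{thm:depth} and unmixedness, $\depth(R/I)=\depth(R/(I\colon x^k))=\depth(R/(I,x^k))=n-\bigh(I)$, so the two inductive bounds finish immediately).

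The one genuine defect is your justification of the base case $r=1$. The claim that $\sdepth\geq\depth$ ``is classically known in the Cohen-Macaulay case'' (and its attribution to Herzog--Vladoiu--Zheng) is false: Stanley's inequality can fail for Cohen-Macaulay rings, and the disproof of Stanley's conjecture cited in this very paper, \cite{Duval}, proceeds by exhibiting a \emph{Cohen-Macaulay} non-partitionable simplicial complex, whose Stanley--Reisner ring violates $\sdepth(R/I_\Delta)\geq\depth(R/I_\Delta)$. So Cohen--Macaulayness alone cannot carry the base case. What is actually needed is the inequality for a \emph{primary} monomial ideal, and that holds for an elementary structural reason: if $I$ is $\fp$-primary, then $R/I\cong(R'/I')\otimes_{\KK}\KK[x_j\colon x_j\notin\fp]$, where $R'=\KK[x_i\colon x_i\in\fp]$ and $I'=I\cap R'$ is Artinian; tensoring any Stanley decomposition of $R'/I'$ (whose Stanley depth is at least $0$) with the free variables yields $\sdepth(R/I)\geq n-\height(\fp)=\dim(R/I)=\depth(R/I)$. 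With the base case repaired this way, your argument coincides with the paper's proof.
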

\begin{proof}
	 Since $I$ is unmixed, $\depth(R/I)=\depth(R/(I\colon x^k)=\depth(R/(I,x^k))$, by Theorem~\ref{thm:depth}. Now using induction on $r$ and the fact from \cite[Lemma~2.2]{Rauf-2010} that 
	\[\sdepth(R/I) \geq \min \{ \sdepth(R/(I\colon x^k)),\, \sdepth(R/(I,x^k))\},\]
	we get the desired inequality.
\end{proof}

As Example~\ref{sep-not-seq} shows, for an ass-decomposable monomial ideal $I$, the quotient ring $R/I$ is not necessarily sequentially Cohen-Macaulay. However, an unmixed ass-decomposable monomial ideal is Cohen-Macaulay.
\begin{cor} \label{unmixed ass-decomposable is CM}
	Let $I$ be an unmixed ass-decomposable monomial ideal. Then, $R/I$ is Cohen-Macaulay. 
\end{cor}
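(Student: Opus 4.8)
The plan is to combine the depth formula of Theorem~\ref{thm:depth} with the definition of the unmixed property. First I would recall that for any ideal $I \subseteq R$, the dimension of $R/I$ is computed as $\dim(R/I) = \max\{\dim R/\fp \colon \fp \in \ass(I)\}$, since the dimension is governed by the minimal (hence associated, since minimal primes are always associated) primes of largest codimension. The unmixed hypothesis asserts precisely that $\dim R/\fp = \dim R/I$ for \emph{all} $\fp \in \ass(I)$; in particular the quantities $\dim R/\fp$ are constant over $\ass(I)$.

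The main step is then immediate from Theorem~\ref{thm:depth}. That theorem gives
\[
\depth(R/I) = \min\{\depth(R/\fp) \colon \fp \in \ass(I)\} = \min\{\dim R/\fp \colon \fp \in \ass(I)\},
\]
where I use that each $\fp$ is prime, so $R/\fp$ is a polynomial ring and hence Cohen-Macaulay with $\depth R/\fp = \dim R/\fp$. Under the unmixed assumption every term in this minimum equals the common value $\dim R/I$, so the minimum is simply $\dim R/I$. Therefore $\depth(R/I) = \dim(R/I)$, which is exactly the statement that $R/I$ is Cohen-Macaulay.

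I do not anticipate any genuine obstacle here, since the result is an essentially formal consequence of Theorem~\ref{thm:depth}: the heavy lifting (the depth formula for ass-decomposable ideals via the short exact sequence and induction on the number of primary components) has already been done. The only point requiring any care is the bookkeeping identity $\dim(R/I) = \max\{\dim R/\fp \colon \fp \in \ass(I)\}$ together with the observation that unmixedness collapses the max and the min in Theorem~\ref{thm:depth} to the same value; equivalently, one may phrase the conclusion through the big height, noting that $\depth(R/I) = n - \bigh(I) = n - \height(I) = \dim(R/I)$ precisely because unmixedness forces $\bigh(I) = \height(I)$.
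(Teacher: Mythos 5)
Your proposal is correct and follows essentially the same route as the paper: the paper's proof likewise observes that unmixedness forces $\dim(R/I)=\dim R/\fp$ for all $\fp\in\ass(I)$ and then invokes Theorem~\ref{thm:depth} to identify $\depth(R/I)$ with this common value. Your extra remarks (that monomial primes give polynomial-ring quotients, and the reformulation via $\bigh(I)=\height(I)$) are just the details the paper leaves implicit.
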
 
\begin{proof}
	Since $I$ is unmixed, $\dim(R/I)=\dim (R/\fp)$ for all $\fp\in\ass(I)$. Now, Theorem~\ref{thm:depth} implies the result.
\end{proof}

In the rest of this section, we study the Castelnuovo-Mumford regularity of squarefree ass-decomposable monomial ideals i.e. the Stanley-Reisner ideals of vertex decomposable simplicial complexes.
Let $I  \subseteq S$ be a monomial ideal and
\[  \cdots \to F_2 \to F_1 \to F_0 \to I \to 0 \]
be  graded minimal free resolution of $I$ with $F_i = \oplus_j S(-j)^{\beta^{\mathbb{K}}_{i,j}(I)}$, for all $i$. The numbers $\beta_{i,j}^{\mathbb{K}}(I) = \dim_{\mathbb{K}} \mbox{Tor}^S_i(I, \mathbb{K})_j$ are called the \textit{graded Betti numbers}
of $I$ and
the \textit{Castelnuovo-Mumford regularity} of $I \neq 0$, $\mathrm{reg}(I)$, is given by
\[ \mbox{reg}(I) = \sup\{j - i \colon \quad \beta_{i,j}^{\mathbb{K}}(I) \neq 0\}.\]
Note that $\reg(R/I)=\reg(I)-1$. We say that $I$ has a \textit{$d$-linear resolution} if $I$ is generated by monomials of degree $d=\reg(I)$.

\begin{prop}\label{depth-reg}
 Let $I=\cap_{i=1}^r \fp_i$ be a minimal prime  decomposition of 
 the squarefree monomial ideal $I$. If $I$ is ass-decomposable then $\reg(R/I)\leq r-1$.
\end{prop}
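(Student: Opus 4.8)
The plan is to induct on the number $r = |\ass(I)|$ of minimal primes of $I$, using the short exact sequence attached to the decomposing variable together with the recursive structure of ass-decomposability. For the base case $r = 1$, the ideal $I = \fp_1$ is a monomial prime, so $R/I$ is isomorphic to a polynomial ring and $\reg(R/I) = 0 = r - 1$.

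For the inductive step, suppose $r > 1$ and that the bound holds for squarefree ass-decomposable ideals with fewer than $r$ minimal primes. Since $I$ is squarefree, $\sqrt{I} = I$, and by Remark~\ref{k=1} the decomposing monomial is a single variable $x$ (that is, $k = 1$). I would reorder the minimal primes so that $x \in \fp_1, \ldots, \fp_s$ and $x \notin \fp_{s+1}, \ldots, \fp_r$, and first record that $1 \le s \le r - 1$: since $x \notin I = \cap_j \fp_j$ some prime avoids $x$, giving $s \le r - 1$, while condition~(i) of Definition~\ref{definition of ass-decomposable} forces $\ass(I,x) = \{\fp_j : x \in \fp_j\}$ to be nonempty (as $(I,x)$ is a proper ideal), giving $s \ge 1$. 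Because $(I,x)$ and $I\colon x$ are themselves squarefree, their associated and minimal primes coincide; condition~(i) then identifies $(I,x) = \cap_{j=1}^s \fp_j$, and the elementary computation $\fp_j \colon x = R$ when $x \in \fp_j$ and $\fp_j \colon x = \fp_j$ otherwise identifies $I \colon x = \cap_{j=s+1}^r \fp_j$. Hence $(I,x)$ and $I\colon x$ are squarefree ass-decomposable ideals with $s$ and $r - s$ minimal primes, each strictly less than $r$, so the induction hypothesis yields $\reg(R/(I,x)) \le s - 1$ and $\reg(R/(I\colon x)) \le r - s - 1$.

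Finally I would combine these bounds through the graded short exact sequence
\[
0 \longrightarrow (R/(I\colon x))(-1) \xrightarrow{\;\cdot x\;} R/I \longrightarrow R/(I,x) \longrightarrow 0,
\]
whose standard regularity estimate $\reg(R/I) \le \max\{\reg(R/(I\colon x)) + 1,\, \reg(R/(I,x))\}$ gives
\[
\reg(R/I) \le \max\{(r-s-1)+1,\, s-1\} = \max\{r - s,\, s - 1\} \le r - 1,
\]
the final inequality holding since $1 \le s \le r - 1$. The step demanding the most care is the bookkeeping of the degree shift $(-1)$ in the short exact sequence, so that the contribution of $I\colon x$ correctly picks up the extra $+1$, together with the verification that both $s$ and $r - s$ are genuinely smaller than $r$ so that the induction applies to each summand; once these are secured, the arithmetic of the maximum closes the argument.
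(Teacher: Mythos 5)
Your proof is correct and follows essentially the same route as the paper: induction on $r$ via the decomposing variable $x$, the identifications $(I,x)=\cap_{x\in\fp_j}\fp_j$ and $I\colon x=\cap_{x\notin\fp_j}\fp_j$, and the regularity estimate from the short exact sequence $0 \to (R/(I\colon x))(-1) \to R/I \to R/(I,x) \to 0$. You simply make explicit the bookkeeping (that $1\le s\le r-1$, so both ideals in the recursion have strictly fewer than $r$ components) that the paper's terser bound $\reg(R/I)\le (r-2)+1$ leaves implicit.
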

\begin{proof}
We proceed by induction on $r$ to prove the assertion. If $r=1$, we have nothing to prove. Assume that $r>1$ and the assertion holds for all squarefree ass-decomposable monomial ideals with less than $r$ irreducible components. Let $x$ be a decomposing variable of $I$ and consider the short exact sequence
\begin{equation*}
0 \longrightarrow R/(I\colon x)(-1) \longrightarrow R/I\longrightarrow R/(I,x)\longrightarrow 0.
\end{equation*}
Using the above exact sequence along with \cite[Corollary 20.19]{Ei}, we get 
\begin{align*}
\reg(R/I)& \leq\max\{\reg(R/(I\colon x))+1,\,\reg(R/(I,x))\}\\
& \leq (r-2)+1=r-1.
\end{align*}
The last inequality follows from induction hypothesis. 

\end{proof}

If $I= \cap_{i=1}^r \fp_i$ is a minimal prime decomposition of a squarefree monomial ideal $I$, then $r$ is said to be the \textit{index of irreducibility} of $I$. Let $\cG(I)$ denote the minimal set of monomial generators of $I$ and $d(I)=\max\{\deg(u) \ ; \ u\in\cG(I)\}$. It is immediate consequence of the definition of regularity that $\reg(I) \geq d(I)$.	
	
\begin{prop} \label{when reg=d(I)-1}
 Let $I$ be a squarefree ass-decomposable monomial ideal with decomposing variable $x$ and the index of irreducibility $r$.  
    Then the following statements are equivalent.
\begin{enumerate}
	\item[(i)]$\reg(R/I)=r-1$.
	\item[(ii)]$d(I)=r$.
\end{enumerate} 
If the above conditions hold, then  $I=\fp+xJ$ where $\fp$ is a monomial prime ideal and $J$ is  ass-decomposable monomial ideal. 
\end{prop}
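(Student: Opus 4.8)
The plan is to prove the equivalence by induction on the index of irreducibility $r$ and to read off the structural description from the equality case. Throughout I write $I=\cap_{i=1}^r\fp_i$ and, following Proposition~\ref{controlling associated primes} and Remark~\ref{rem-2} (recall that by Remark~\ref{k=1} the decomposing monomial is the variable $x$ itself), I arrange the primes so that $(I,x)=\cap_{i=1}^s\fp_i$ is the intersection of the $s$ minimal primes containing $x$ and $(I\colon x)=\cap_{i=s+1}^r\fp_i$ is the intersection of the remaining $r-s$; here $1\le s\le r-1$, since $x\notin\sqrt I$ forces $s<r$ while condition (i) of Definition~\ref{definition of ass-decomposable} forces $s\ge1$. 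In particular $(I,x)$ and $(I\colon x)$ are squarefree ass-decomposable ideals of indices $s$ and $r-s$. The direction $(ii)\Rightarrow(i)$ is immediate: since $\reg(I)\ge d(I)$ and $\reg(R/I)=\reg(I)-1$, the hypothesis $d(I)=r$ gives $\reg(R/I)\ge r-1$, while Proposition~\ref{depth-reg} gives $\reg(R/I)\le r-1$.

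For $(i)\Rightarrow(ii)$ I induct on $r$, the case $r=1$ (where $I$ is prime, $d(I)=1$ and $\reg(R/I)=0$) being clear. Feeding the inductive bounds $\reg(R/(I\colon x))\le r-s-1$ and $\reg(R/(I,x))\le s-1$ of Proposition~\ref{depth-reg} into the estimate coming from the short exact sequence $0\to R/(I\colon x)(-1)\to R/I\to R/(I,x)\to0$ together with \cite[Corollary 20.19]{Ei}, I obtain $\reg(R/I)\le\max\{r-s,\,s-1\}$. Since $s\ge1$ yields $r-s\le r-1$ and $s\le r-1$ yields $s-1\le r-2$, the right-hand side is at most $r-1$, with value $r-1$ only when $r-s=r-1$, i.e.\ $s=1$. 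As the hypothesis forces $\max\{r-s,s-1\}\ge\reg(R/I)=r-1$, I conclude $s=1$; that is, exactly one minimal prime $\fp_1$ contains $x$, and $(I,x)=\fp_1$ is prime, so $\reg(R/(I,x))=0$.

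With $s=1$ the estimate reads $\reg(R/I)\le\max\{\reg(R/(I\colon x))+1,\,0\}$, so $\reg(R/I)=r-1$ cannot hold unless $\reg(R/(I\colon x))\ge r-2$; combined with Proposition~\ref{depth-reg} this gives $\reg(R/(I\colon x))=r-2$ exactly. Since $(I\colon x)$ has index $r-1<r$, the inductive hypothesis applies and yields $d(I\colon x)=r-1$. For the structure, I write $\fp$ for the monomial prime ideal generated by the variables of $\fp_1$ other than $x$, so that $\fp_1=\fp+(x)$ and $\fp\subseteq I$, the $x$-free minimal generators of $I$ being exactly the generators of the prime $(I,x)=\fp_1$ different from $x$. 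Then every minimal generator of $I$ is either a generator of $\fp$ or divisible by $x$, whence $I=\fp+x(I\colon x)$; as $(I\colon x)$ is ass-decomposable by Definition~\ref{definition of ass-decomposable}, taking $J=(I\colon x)$ proves the final assertion. To close the induction, I choose a minimal generator $v$ of $(I\colon x)$ of degree $d(I\colon x)=r-1$ with $v\notin\fp$ (one exists because the generators of $(I\colon x)$ lying in $\fp$ are single variables, while $(I\colon x)\not\subseteq\fp$ lest $I=\fp$ be prime); since no generator of $(I\colon x)$ involves $x$, the monomial $xv$ is a minimal generator of $I$ of degree $r$, giving $d(I)\ge r$, and with $d(I)\le\reg(I)=r$ I get $d(I)=r$.

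The main obstacle is the equality analysis in the inductive step: the crude bound $\reg(R/I)\le r-1$ of Proposition~\ref{depth-reg} must be sharpened to the split bound $\reg(R/I)\le\max\{r-s,\,s-1\}$ so that attaining $r-1$ forces $s=1$, and one must then check that the regularity bound for $(I\colon x)$ is attained with equality in order to invoke the inductive equivalence. Once $s=1$ is secured, the remaining steps—extracting the decomposition $I=\fp+x(I\colon x)$ and exhibiting the degree-$r$ generator $xv$—are routine monomial bookkeeping.
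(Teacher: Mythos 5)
Your proof is correct, and its skeleton coincides with the paper's: the same argument for (ii)$\Rightarrow$(i) via Proposition~\ref{depth-reg}, the same induction for (i)$\Rightarrow$(ii), and the same terminal decomposition $I=\fp+xJ$. You deviate at two genuine points, both worth noting. First, where the paper pins down $\reg(R/(I\colon x))=r-2$ by citing \cite[Lemma 2.10]{Dao-Huneke-Schweig} (which says the regularity is \emph{attained} at one of the two terms $\reg(R/(I\colon x))+1$ or $\reg(R/(I,x))$), you use only the inequality $\reg(R/I)\le\max\{\reg(R/(I\colon x))+1,\,\reg(R/(I,x))\}$ from \cite[Corollary 20.19]{Ei} --- the same tool Proposition~\ref{depth-reg} already uses --- sharpened by the component counts into $\reg(R/I)\le\max\{r-s,\,s-1\}$, where your $s$ is the number of minimal primes containing $x$; attaining $r-1$ then forces $s=1$, and with $(I,x)$ prime the same inequality forces $\reg(R/(I\colon x))=r-2$. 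This makes the step self-contained, needing no external lemma beyond what the previous proposition used (the paper reaches the same conclusion, in its indexing $s=r-1$, via the Dao--Huneke--Schweig lemma plus a second appeal to Proposition~\ref{depth-reg}). Second, your $J$ differs from the paper's: the paper writes $I\colon x=\fp+J$ with the support of $J$ avoiding all variables of the unique minimal prime containing $x$, so that $d(I)=d(J)+1$ is immediate, but it must then justify that $J$ inherits ass-decomposability from $\fp+J=I\colon x$; you take $J=I\colon x$ itself, which is ass-decomposable by clause (ii) of Definition~\ref{definition of ass-decomposable}, and instead certify $d(I)\ge r$ by exhibiting the minimal generator $xv$, with $v\in\cG(I\colon x)$ of degree $r-1$ and $v\notin\fp$. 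Your supporting monomial arguments (that $\fp\subseteq I$, that the $x$-free minimal generators of $I$ are exactly $\cG(\fp)$, and that $xv$ is indeed a minimal generator, including the separate treatment needed when $r=2$) all check out, so both routes are valid; yours trades the external lemma and the support bookkeeping for slightly longer but elementary generator-level verifications.
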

\begin{proof}
If (ii) holds, then (i)  follows from  Proposition~\ref{depth-reg} and the obvious relations $\reg(R/I)=\reg(I)-1 \geq d(I)-1$. Now, assume that (i) holds. We use induction on $r$ to obtain (ii). 
If $r=1$, there is nothing to prove. Assume that $r>1$ and the assertion holds for any squarefree ass-decomposable monomial ideal $J$ whose regularity of $R/J$ is less than $r-1$. Let $I=\cap_{i=1}^r \fp_i$ be a  minimal prime  decomposition of $I$ such that $x\notin\cup^s_{i=1}\fp_i$ and $x\in\cap^r_{i=s+1}\fp_i$, for some $1 \leq s <r$.  

By \cite[Lemma 2.10]{Dao-Huneke-Schweig}, we have
\begin{align*}
\reg(R/I)\in\{\reg(R/(I\colon x))+1,\reg(R/(I,x))\}.
\end{align*}
As $\reg(R/(I,x))\leq r-2$, by Proposition~\ref{depth-reg}, we derive 
\[
r-1=\reg(R/I)=\reg(R/(I\colon x))+1.
\]
Thus $\reg(R/(I\colon x))=r-2$. Since $I \colon x$ is a squarefree ass-decomposable monomial ideal, it follows from induction hypothesis that $d(I \colon x)=r-1$. On the other hand, proposition~\ref{depth-reg} implies that $r-2 \leq s-1$, and hence $s=r-1$. Let $\fp$ be the prime ideal generated by $\cG(\fp_r)\setminus\{x\}$. Since  $\fp_r\subseteq\fp_i+(x)$, for $i=1,\dots,r-1$, we get $\fp\subseteq\cap^{r-1}_{i=1}\fp_i$. Therefore, $I \colon x=\cap^{r-1}_{i=1} \fp_i=\fp+J$ for a monomial ideal $J$ whose support does not contain any $y \in \cG(\fp_r)$. Thus
\begin{equation}\label{local eq 1}
I=(\fp+J)\cap(\fp,x)=\fp+xJ.
\end{equation}
Note that $r-1= d(I \colon x) =d(J)$, hence $d(I)=d(J)+1=(r-1)+1=r$, in view of \eqref{local eq 1}. Furthermore, $J$ is ass-decomposable because $\fp+J=\cap^{r-1}_{i=1} \fp_i=I\colon x$ is ass-decomposable. 
\end{proof}

As a consequence of Propositions \ref{depth-reg} and \ref{when reg=d(I)-1}, we have the following result for the edge ideal of chordal graphs. First note that the independence complex of a chordal graph is vertex decomposable (see {\cite[Corollary 7]{Woodroofe} or \cite[Theorem 4.1]{Dochtermann}}) hence the edge ideal of chordal graph is squarefree ass-decomposable ideal. Thus applying our results about (squarefree) ass-decomposable ideals, we obtain the following corollary. Statement (i) of this corollary can be found in \cite[Corollary 5.6]{DS} where it is proved by a different method. Recall that a \textit{star} graph with center $w$ is a graph $G=(V,E)$ where $V=\{w, v_1, \ldots, v_n\}$ and $E=\{\{w,v_i\} \colon \; i=1, \ldots,n\}$. 

\begin{cor}
Let $I=I(G)\subseteq R$ be the edge ideal of a  chordal graph $G$ and $r$ be the index of irreducibility of $I$. Then
\begin{itemize}
\item[\rm (i)] $\depth(R/I)=\min\{\depth(R/\fp) \colon\; \ \fp\in\ass(I)\}=n-\bigh(I)$.
\item[\rm (ii)] $\reg(R/I)\leq r-1$.
\item[(iii)] The followings are equivalent:
\begin{enumerate}
\item $\reg(R/I)=r-1$;
\item $r=2$;
\item $G$ is a star graph.
\end{enumerate}
If one of the above conditions holds, then $I$ has a linear resolution.
\end{itemize} 
\end{cor}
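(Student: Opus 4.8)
The plan is to reduce the entire corollary to the general results already proved for squarefree ass-decomposable ideals, so the first step is simply to record why those results apply. As recalled just before the statement, chordality of $G$ makes the independence complex $\Delta_G$ vertex decomposable, and Corollary~\ref{squarefree decomposable} identifies the Stanley--Reisner ideals of vertex decomposable complexes with the squarefree ass-decomposable ideals; hence $I=I(G)=I_{\Delta_G}$ is squarefree ass-decomposable. With this in hand, part (i) is exactly Theorem~\ref{thm:depth} applied to $I$, and part (ii) is exactly Proposition~\ref{depth-reg} (note that, $I$ being radical, $\ass(I)$ consists of the minimal primes, so $r=|\ass(I)|$ is indeed the index of irreducibility). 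No further argument is needed for (i) and (ii).

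For (iii) I would first dispose of a degenerate case by assuming $G$ has at least one edge, equivalently $I\neq0$; then $I$ is generated in degree two, so $d(I)=2$, and $I$ is not prime, forcing $r\geq2$. (Isolated vertices alter neither $I$, nor $r$, nor $\reg$, so they can be ignored.) Proposition~\ref{when reg=d(I)-1} asserts that for a squarefree ass-decomposable ideal $\reg(R/I)=r-1$ holds if and only if $d(I)=r$; substituting $d(I)=2$ gives (1)$\Leftrightarrow$(2) at once, namely $\reg(R/I)=r-1$ exactly when $r=2$. The direction (3)$\Rightarrow$(2) is then a short count: the minimal primes of $I(G)$ are the ideals $P_{\bar F}$ attached to facets $F$ of $\Delta_G$, i.e. to minimal vertex covers of $G$, and a star with center $w$ has precisely the two minimal vertex covers $\{w\}$ and the leaf set, so $r=2$.

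The hard part will be (2)$\Rightarrow$(3), and here I would lean on the structural conclusion of Proposition~\ref{when reg=d(I)-1} rather than on a direct graph-theoretic argument. Since $r=2$ forces its equivalent conditions to hold, that proposition yields a decomposition $I=\fp+xJ$ with $\fp$ a monomial prime, $J$ ass-decomposable, and $d(J)=r-1=1$; thus $J$ is generated by variables, say $J=(z_1,\dots,z_b)$ with each $z_i\neq x$. Because $I$ is an edge ideal it has no degree-one generators, which forces $\fp=(0)$, and therefore $I=xJ=(xz_1,\dots,xz_b)$, the edge ideal of the star with center $x$ and leaves $z_1,\dots,z_b$; this is (3). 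The crucial point to get right is precisely this degree bookkeeping: that the edge-ideal hypothesis annihilates the prime summand $\fp$ and pins $J$ down to a squarefree monomial prime. Finally, the linear-resolution claim is automatic once (1)--(3) hold, since then $\reg(I)=\reg(R/I)+1=r=2=d(I)$, so $I$ is generated in degree equal to its regularity and hence has a $2$-linear resolution.
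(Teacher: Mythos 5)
Your proposal is correct and follows essentially the same route as the paper's proof: both reduce to the squarefree ass-decomposability of $I(G)$ via chordality, invoke Theorem~\ref{thm:depth} and Proposition~\ref{depth-reg} for (i) and (ii), get (1)$\Leftrightarrow$(2) from Proposition~\ref{when reg=d(I)-1} with $d(I)=2$, and prove (2)$\Rightarrow$(3) from the structural decomposition $I=\fp+xJ$ of that proposition. Your write-up merely makes explicit two steps the paper leaves implicit --- that the absence of degree-one generators forces $\fp=(0)$, and that the linear resolution follows from $\reg(I)=2=d(I)$ --- which is a faithful filling-in of detail rather than a different argument.
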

	
\begin{proof}
As it is mentioned in the beginning of this corollary, the ideal $I$ is a squarefree ass-decomposable ideal. Hence (i) follows form Theorem~\ref{thm:depth} while (ii) follows from Proposition~\ref{depth-reg}. To show that the statements in (iii) are equivalent, first note that (1) and (2) are equivalent by virtue of Proposition~\ref{when reg=d(I)-1}. If (2) holds, then applying Proposition~\ref{when reg=d(I)-1} once again, we observe that $I=xJ$ where $J$ is a squarefree ass-decomposable ideal. Thus $J$ is a prime monomial ideal and consequently $I$ is the edge ideal of a star graph. Conversely, if $G$ is a star graph with root vertex $\{v\}$, then $I=(x_vx_u\ ; \ u\in V(G)\setminus \{v\})=(x_v)\cap\fp$, where $\fp$ is the prime ideal generated by variables $\{x_u \colon \; u\in V(G)\setminus \{v\}\}$. In particular, the index of irreducibility of $I$ is exactly $2$.
\end{proof}


\end{document}